\theoremstyle{definition}
\newtheorem{dfn}{Definition}
\theoremstyle{plain}
\newtheorem{thm}{Theorem}
\newtheorem{pro}{Proposition}
\newtheorem{lmm}{Lemma}
\theoremstyle{definition}
\newtheorem{rem}{Remark}
\newcommand{\E}{\mathbb{E}}
\renewcommand{\P}{\mathbb{P}}
\newcommand{\F}{\mathcal{F}}
\newcommand{\K}{\mathrm{K}}
\renewcommand{\L}{\mathrm{L}}
\newcommand{\I}{\mathrm{I}}
\newcommand{\G}{\mathrm{G}}
\newcommand{\e}{\mathrm{e}}
\newcommand{\la}{\langle}
\newcommand{\ra}{\rangle}
\renewcommand{\H}{\mathcal{H}}
\newcommand{\1}{\mathbf{1}}
\renewcommand{\d}{\mathrm{d}}
\newcommand{\p}{\partial}
\begin{document}
\title[Mixed fractional Brownian motion]{Long-range dependent completely correlated mixed fractional Brownian motion}

\date{\today}

\author[Dufitinema]{Josephine Dufitinema}
\address{Department of Mathematics and Statistics, University of Vaasa, P.O. Box 700, FIN-65101 Vaasa, FINLAND}
\email{josephine.dufitinema@uva.fi}

\author[Shokrollahi]{Foad Shokrollahi}
\address{Department of Mathematics and Statistics, University of Vaasa, P.O. Box 700, FIN-65101 Vaasa, FINLAND}
\email{foad.shokrollahi@uva.fi}

\author[Sottinen]{Tommi Sottinen}
\address{Department of Mathematics and Statistics, University of Vaasa, P.O. Box 700, FIN-65101 Vaasa, FINLAND}
\email{tommi.sottinen@iki.fi}

\author[Viitasaari]{Lauri Viitasaari}
\address{Department of Mathematics, Uppsala University, Box 480, 751 06 Uppsala, SWEDEN}
\email{lauri.viitasaari@math.uu.se}


\begin{abstract}
In this paper we introduce the long-range dependent completely correlated mixed fractional Brownian motion (ccmfBm). This is a process that is driven by a mixture of Brownian motion (Bm) and a long-range dependent completely correlated fractional Brownian motion (fBm, ccfBm) that is constructed from the Brownian motion via the Molchan--Golosov representation.  Thus, there is a single Bm driving the mixed process.  
In the short time-scales the ccmfBm behaves like the Bm (it has Brownian H\"older index and quadratic variation). However, in the long time-scales it behaves like the fBm (it has long-range dependence governed by the fBm's Hurst index).  
We provide a transfer principle for the ccmfBm and use it to construct the Cameron--Martin--Girsanov--Hitsuda theorem and prediction formulas. Finally, we illustrate the ccmfBm by simulations.
\end{abstract}

\keywords{Cameron--Martin--Girsanov theorem,
fractional Brownian motion,
fractional Gaussian noises,
long-range dependence,
prediction,
transfer principle}

\subjclass[2020]{Primary 60G22; Secondary 60G25, 60G15}

\maketitle


\section{Introduction}\label{sect:introduction}
Long range dependence have numerous applications in various models and have been a topic of active research, see e.g. monographs \cite{Beran-et-al-2013, Samorodnitsky-2016} and references therein.

The fractional Brownian motion (fBm) is maybe the simplest model for long-range dependence. Indeed, fBm is Gaussian, has stationary increments, and continuous sample paths almost surely. For details on the fBm and it's usage in several applications, we refer to \cite{Mishura-2008}. On the other hand, fBm is not a suitable model if one requires long-range dependence in long time scales and, at the same time, somehow different behaviour in the short time scales. For example in finance, empirical studies suggest long-range dependence while the standard Brownian motion (Bm) would be more suitable model in the short time scales. This can be modelled by a so-called mixed fractional Brownian motion (mfBm) that inherits short time behaviour from the Brownian motion and long time behaviour from the fBm. Cheridito \cite{Cheridito-2001} introduced an independent mfBm as a sum of a Brownian motion and an independent fBm that has received attention in the literature ever since. 

One of the wanted features for a Gaussian model is the so-called transfer principle that allows to reconstruct underlying Brownian motion from the observations. Indeed, once the transfer principle is established, several applications including prediction formulas and parameter estimation become rather straightforward. For this reason transfer principles are actively studied in the literature, see e.g. \cite{Cai-Chigansky-Kleptsyna-2016} in the case of independent mfBm and \cite{Sottinen-Viitasaari-2016b} for the general case in the abstract setting.

In this article we introduce the long-range dependent completely correlated mixed fractional Brownian motion (ccmfBm). We study its basic properties and provide a transfer principle for it. In comparison to the independent mfBm, in our case the transfer principle is explicit allowing easily computable formulas and model fitting in different applications. 

The rest of the paper is organized as follows: 
In Section \ref{sect:defs} we recall what is the fractional Brownian motion (fBm) and recall its basic properties, and introduce the long-range dependent completely correlated mixed fractional Brownian motion (ccmfBm) and state its basic properties.
In Section \ref{sect:transfer_principle} we develop the transfer principle.  In Section \ref{sect:cmg} develop the Cameron--Martin--Girsanov theorem for the ccmfBm by using the transfer principle.  As an application we show how to use the Cameron--Martin--Girsanov in estimating a drift parameter in a ccmfBm model. 
In Section \ref{sect:prediction} we state the prediction formula for the ccmfBm that follows directly from the transfer principle. In Section \ref{sect:simulation} we illustrate the ccmfBm by simulations.  Finally, in Section \ref{sect:conclusions} we summarize our findings and discuss the short-range dependent ccmfBm shortly.

\section{Definitions and Basic Properties}\label{sect:defs}

We recall briefly what is the fractional Brownian motion and state its basic properties.  Then we introduce the long-range dependent completely correlated mixed fractional Brownian motion and state its basic properties.

The fractional Brownian motion (fBm) $B^H={(B^H_t)}_{t\ge0}$ with Hurst index $H\in(0,1)$, introduced by Kolmogorov \cite{Kolmogorov-1940} and christened by Mandelbrot and Van Ness \cite{Mandelbrot-Van-Ness-1968}, is the centered Gaussian process having the covariance function
$$
R_H(t,s) = \frac12\left[t^{2 H} + s^{2 H} - |t-s|^{2 H}\right].
$$
The fBm is the (up to a multiplicative constant) unique centered Gaussian process that has stationary increments and is self-similar with the Hurst index $H$. In particular, if $H=\frac12$, then the fBm is the standard Brownian motion (Bm).
For more details on the fBm, we refer to Biagini et al. \cite{Biagini-Hu-Oksendal-Zhang-2008} and Mishura \cite{Mishura-2008}. 
In this article, we consider a generalization of the fBm where the fBm is mixed with a Bm that generates the fBm.  For other generalizations of the fBm, see e.g. \cite{Ayache-Levy-Vehel-1999,Cheridito-2001,Perrin-Harba-Berzin-Joseph-Iribarren-Bonami-2001,Perrin-Harba-Iribarren-Jennane-2005}.

For $H\in(\frac12,1)$ the fractional Brownian motion is long-range dependent in the sense of the following ``non-stationary'' definition:  
A stochastic process $Y$ is long-range dependent if, for all $t_0\ge0$ and lags $\delta>0$, the incremental covariance
$$
\rho_Y(t_0,\delta;t) =
\E\left[\left(Y_{t_0+\delta}-Y_{t_0}\right)\left(Y_{t+\delta}-Y_t\right)\right]
$$
decay at most following a power law:
$$
\rho_Y(t_0,\delta;t) \ge \frac{C_{t_0,\delta}}{t^{\beta}}
$$
for some $\beta>0$.
Indeed, for fBm with $H>1/2$, we have
\begin{eqnarray*}
\rho_{B^H}(t_0,\delta;t)
&=&
\E\left[\left(B^H_{t_0+\delta}-B^H_{t_0}\right)\left(B^H_{t+\delta}-B^H_t\right)\right] \\
&=&
\frac{1}{\delta^{2H}}\E\left[B^H_1\left(B^H_{\frac{t-t_0}{\delta}+1}-B^H_{\frac{t-t_0}{\delta}}\right)\right] \\
&=&
\frac{1}{\delta^{2H}}
\int_0^1\int_{\frac{t-t_0}{\delta}}^{\frac{t-t_0}{\delta}+1}
\frac{\partial^2 R_H}{\partial u \partial v}(u,v)\, \d u \d v \\
&=&
\frac{H(2H-1)}{\delta^{2H}}
\int_0^1\int_{\frac{t-t_0}{\delta}}^{\frac{t-t_0}{\delta}+1}
(u-v)^{2H-2}\, \d u \d v \\
&\sim&
\frac{H(2H-1)\delta^2}{
\left(t-t_0\right)^{2-2H}}.
\end{eqnarray*}
Here and in what follows, we use the notation $f(t)\sim g(t)$ for  asymptotic equivalence meaning that $\lim f(t)/g(t) =1$.
\begin{rem}
We only consider the case $H\in(1/2,1)$, even though some of the results would be true also for the short-range dependent case where $H\in(0,1/2)$. See Section \ref{sect:conclusions} for further discussion of the short-range completely correlated mixed fractional Brownian motions.
\end{rem}

Denote
$$
c(H) = 
\sqrt{\frac{2H\Gamma(\frac32-H)}{\Gamma(H+\frac12)\Gamma(2-2H)}}\left(H-\frac12\right)
$$
where $\Gamma$
is the Gamma function.
Set
\begin{equation}\label{eq:mg-kernel}
K_H(t,s) =
c(H) \frac{1}{s^{H-\frac12}} \int_s^t \frac{u^{H-\frac12}}{(u-s)^{\frac32-H}}\, \d u.
\end{equation}
Denote $t\wedge s = \min(t,s)$. Then
$$
R_H(t,s) = \int_0^{{t\wedge s}} K_H(t,u)K_H(s,u)\, \d u,
$$
and consequently the following Molchan--Golosov \cite{Molchan-Golosov-1969} representation holds:
\begin{equation}\label{eq:mg}
B^H_t = \int_0^t K_H(t,s)\, \d W_s,
\end{equation}
where $W$ is a Bm.  
We note that the representation \eqref{eq:mg} is of Volterra type meaning that $K_H(t,s)=0$ when $s>t$. 

\begin{dfn}\label{dfn:ccmfbm}
The completely correlated mixed fractional Brownian motion (ccmfBm) is 
$$
X = a W + b B^H
$$
where $a,b\in\mathbb{R}$ with $ab\ne0$, $W$ is a Bm and $B^H$ is a fBm constructed from $W$ via \eqref{eq:mg}.
\end{dfn}

Unlike the mixed fBm with independent summands (see e.g. Cheridito \cite{Cheridito-2001}), the ccmfBm does not have stationary increments.  It however shares some path-properties and long-range dependence with the independent summands fBm. 

\begin{pro}\label{pro:basic}
The ccmfBm is a centered Gaussian process with the covariance function
\begin{eqnarray}\label{eq:cov}
\lefteqn{R(t,s)} \\
&=& \nonumber
a^2({t\wedge s}) + ab\int_0^{{t\wedge s}} \left[K_H(t,u) + K_H(s,u)\right]\,\d u
+ b^2 R_H(t,s).
\end{eqnarray}
Moreover, the ccmfBm
\begin{enumerate}
\item is H\"older continuous with index $1/2$,
\item has quadratic variation $t\mapsto a^2t$,
\item is long-range dependent having the same power law decay in its autocovariance as the fBm.
\end{enumerate}
\end{pro}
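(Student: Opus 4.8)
The plan is to treat the four assertions in turn, exploiting throughout that $X$ is a linear functional of a \emph{single} Brownian motion $W$.

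\textbf{Gaussianity and the covariance.} By the Molchan--Golosov representation \eqref{eq:mg}, $X=aW+bB^H$ is a continuous linear image of $W$, hence a centered Gaussian process. For \eqref{eq:cov} I would expand $R(t,s)=\E[X_tX_s]$ bilinearly. Two terms are immediate: $a^2\E[W_tW_s]=a^2(t\wedge s)$ and $b^2\E[B^H_tB^H_s]=b^2R_H(t,s)$. For the mixed terms I combine \eqref{eq:mg} with the It\^o isometry and the Volterra property $K_H(s,u)=0$ for $u>s$: writing $W_t=\int_0^\infty\1_{[0,t]}(u)\,\d W_u$ gives $\E[W_tB^H_s]=\int_0^tK_H(s,u)\,\d u=\int_0^{t\wedge s}K_H(s,u)\,\d u$, and symmetrically for $\E[B^H_tW_s]$; summing the four contributions yields \eqref{eq:cov}.

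\textbf{Quadratic variation and H\"older regularity.} Along partitions $\pi$ of $[0,t]$ with mesh $|\pi|\to0$, I write $\sum_i(\Delta_iX)^2=a^2\sum_i(\Delta_iW)^2+2ab\sum_i(\Delta_iW)(\Delta_iB^H)+b^2\sum_i(\Delta_iB^H)^2$, where $\Delta_i$ denotes the increment over the $i$-th subinterval. The first sum converges in probability to $a^2t$ (quadratic variation of Bm). For the third, $\E\sum_i(\Delta_iB^H)^2=\sum_i(\Delta_it)^{2H}\le|\pi|^{2H-1}t\to0$ since $2H-1>0$, so it vanishes in $L^1$; the cross sum is then negligible by Cauchy--Schwarz, as $\big(\sum_i(\Delta_iW)^2\big)^{1/2}$ is bounded in probability and $\big(\sum_i(\Delta_iB^H)^2\big)^{1/2}\to0$. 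Hence the quadratic variation of $X$ is $a^2t$. For the H\"older index, Kolmogorov's continuity criterion applied to the Gaussian increment bounds of order $|t-s|^{p/2}$ for $W$ and $|t-s|^{pH}$ for $B^H$ shows that $X$ admits a locally $\gamma$-H\"older modification for every $\gamma<\frac12$ (using $H>\frac12$). This cannot be improved: a $\gamma$-H\"older path with $\gamma>\frac12$ has zero quadratic variation on compacts, contradicting the previous step since $a\ne0$; and $\gamma=\frac12$ is excluded because then $W=(X-bB^H)/a$ would be $\frac12$-H\"older as well (as $B^H$ is $\gamma'$-H\"older for some $\gamma'\in(\frac12,H)$), contradicting L\'evy's modulus of continuity.

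\textbf{Long-range dependence.} Fix $t_0\ge0$, $\delta>0$ and let $t>t_0+\delta$, so that $[t_0,t_0+\delta]$ and $[t,t+\delta]$ are disjoint. Expanding $\rho_X(t_0,\delta;t)$ bilinearly produces four terms. The pure-Bm term vanishes by independence of Bm increments over disjoint intervals. The term $ab\,\E[(B^H_{t_0+\delta}-B^H_{t_0})(W_{t+\delta}-W_t)]$ vanishes because, by the Volterra property, $B^H_{t_0+\delta}-B^H_{t_0}$ is an It\^o integral of $W$ supported on $[0,t_0+\delta]$, hence orthogonal to $W_{t+\delta}-W_t$. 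The term $b^2\E[(B^H_{t_0+\delta}-B^H_{t_0})(B^H_{t+\delta}-B^H_t)]=b^2\rho_{B^H}(t_0,\delta;t)\sim b^2H(2H-1)\delta^2\,t^{-(2-2H)}$ by the computation already in the text. Finally, by \eqref{eq:mg} and the It\^o isometry,
\[
ab\,\E\big[(W_{t_0+\delta}-W_{t_0})(B^H_{t+\delta}-B^H_t)\big]=ab\int_{t_0}^{t_0+\delta}\big[K_H(t+\delta,u)-K_H(t,u)\big]\,\d u,
\]
and by \eqref{eq:mg-kernel} the integrand is $c(H)\,u^{\frac12-H}\int_t^{t+\delta}v^{H-\frac12}(v-u)^{H-\frac32}\,\d v$; for $u\in[t_0,t_0+\delta]$ one has $v=t(1+o(1))$ and $v-u=t(1+o(1))$ uniformly as $t\to\infty$, so this equals $c(H)\,u^{\frac12-H}\,\delta\,t^{2H-2}(1+o(1))$ uniformly in $u$, and integrating in $u$ gives a contribution of order $t^{2H-2}$. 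Collecting the surviving terms,
\[
\rho_X(t_0,\delta;t)\sim C_{t_0,\delta}\,t^{-(2-2H)},\qquad C_{t_0,\delta}=ab\,c(H)\,\delta\int_{t_0}^{t_0+\delta}u^{\frac12-H}\,\d u+b^2H(2H-1)\delta^2,
\]
which is the same power law $2-2H$ as for the fBm, and yields long-range dependence in the stated sense whenever $C_{t_0,\delta}>0$ (e.g. when $ab>0$). The one genuinely delicate point is this last step: justifying the uniform $o(1)$ estimate for the kernel difference $K_H(t+\delta,u)-K_H(t,u)$, including the case $t_0=0$ where $u^{\frac12-H}$ is merely integrable on $[t_0,t_0+\delta]$, so that the estimate survives the $\d u$-integration. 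Everything else is bookkeeping once the modes of convergence are pinned down.
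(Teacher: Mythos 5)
Your proof is correct, and for the covariance and for part (iii) it follows essentially the paper's own route (bilinear expansion, It\^o isometry, and the Volterra property of $K_H$). Where you genuinely diverge is in parts (i) and (ii): the paper obtains the H\"older index from the incremental-variance asymptotics $\E[(X_t-X_s)^2]\sim a^2|t-s|$ combined with Theorem 1 of Azmoodeh--Sottinen--Viitasaari--Yazigi, and gets the quadratic variation by citing the F\"ollmer/Bender--Sottinen--Valkeila stability result together with the fact that fBm with $H>1/2$ has zero quadratic variation; you instead prove the quadratic variation directly (partition sums, the bound $\sum_i(\Delta_i t)^{2H}\le|\pi|^{2H-1}t$, Cauchy--Schwarz for the cross term) and pin down the exact H\"older index via Kolmogorov's criterion for all $\gamma<\tfrac12$, then exclude $\gamma\ge\tfrac12$ using the positive quadratic variation and L\'evy's modulus of continuity applied to $W=(X-bB^H)/a$. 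Your version is self-contained and elementary, at the cost of length; the paper's is shorter but leans on external results. For (iii) your asymptotics are in fact slightly sharper: keeping the exact factorization $K_H(t+\delta,u)-K_H(t,u)=c(H)\,u^{\frac12-H}\int_t^{t+\delta}v^{H-\frac12}(v-u)^{H-\frac32}\,\d v$ and integrating in $u$ handles $t_0=0$ (where the paper's constant $\delta^2\,\tfrac{\partial K_H}{\partial t}(t,t_0)$ degenerates) and gives the correct constant for fixed $\delta$; the uniform $o(1)$ you flag as delicate is justified exactly as you sketch, since $v-u\ge t-t_0-\delta$ uniformly in $u$ and $u^{\frac12-H}$ is integrable at $0$. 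Finally, your observation that long-range dependence in the stated sense requires the limiting constant $ab\,c(H)\,\delta\int_{t_0}^{t_0+\delta}u^{\frac12-H}\,\d u+b^2H(2H-1)\delta^2$ to be positive --- automatic when $ab>0$ but not obvious when $ab<0$, where cancellation could in principle occur --- is a genuine caveat that the paper's proof passes over silently, as it only asserts that the cross term decays with the same power $t^{2H-2}$ as the fBm term.
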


\begin{proof}
It is clear that $X$ is centered.  The covariance \eqref{eq:cov} follows from the It\^o-isometry
\begin{eqnarray*}
\E\left[W_t B^H_s\right] 
&=&
\E\left[\int_0^T \1_t(u)\, \d W_u\, \int_0^T K_H(s,u)\, \d W_ u\right] \\
&=& \int_0^{{t\wedge s}} K_H(s,u)\, \d u.
\end{eqnarray*}

\noindent(i)\quad The H\"older index follows from Theorem 1 of \cite{Azmoodeh-Sottinen-Viitasaari-Yazigi-2014} and \eqref{eq:cov}. Indeed, let $s<t$. Then
\begin{eqnarray*}
\lefteqn{
\E\left[(X_t-X_s)^2\right]} \\
&=&
a^2|t-s| + ab\E\left[\left(W_t-W_s\right)\left(B_t^H-B_s^H\right)\right] + b^2|t-s|^{2H}
\end{eqnarray*}
and, by It\^o-isometry and Volterra property,
\begin{eqnarray*}
\lefteqn{\E\left[\left(W_t-W_s\right)\left(B_t^H-B_s^H\right)\right]} \\
&=&
\int_0^T \left(\1_t(u)-\1_s(u)\right)\left(K_H(t,u)-K_H(s,u)\right)\, \d u \\
&=&
\int_s^t K_H(t,u)\, \d u
\\
&=&
K_H(t,u^*)|t-s|
\end{eqnarray*}
for some $u^*\in[s,t]$ as $K_H(t,s)$ is continuous in $s$. 
Since $K_H(t,s) \to 0$ as $s\to t$, we see that
$$
\E\left[(X_t-X_s)^2\right] \sim a^2|t-s|,
$$
which shows that the H\"older index is $1/2$.

\noindent(ii)\quad
The quadratic variation comes from the fact that $B^H$ has zero quadratic variation for $H\in(1/2,1)$ applied to Example 1 of \cite{Bender-Sottinen-Valkeila-2008} (see also \cite{Follmer-1981}).  

\noindent(iii)\quad
Finally, let us show the long-range dependence. 
Now,
\begin{eqnarray*}
\rho_X(t_0,\delta;t)
&=&
\E\left[\left(X_{t_0+\delta}-X_{t_0}\right)\left(X_{t+\delta}-X_t\right)\right] \\
&=&
a^2\E\left[\left(W_{t_0+\delta}-W_{t_0}\right)\left(W_{t+\delta}-W_t\right)\right] \\
& & +  
ab \E\left[\left(W_{t_0+\delta}-W_{t_0}\right)\left(B^H_{t+\delta}-B^H_t\right)\right] \\
& & +
ab \E\left[\left(B^H_{t_0+\delta}-B^H_{t_0}\right)\left(W_{t+\delta}-W_t\right)\right] \\
& & +
b^2 \E\left[\left(B^H_{t_0+\delta}-B^H_{t_0}\right)\left(B^H_{t+\delta}-B^H_t\right)\right] \\
&\sim&
ab \E\left[\left(W_{t_0+\delta}-W_{t_0}\right)\left(B^H_{t+\delta}-B^H_t\right)\right] \\
& & + b^2\rho_{B^H}(t_0,\delta;t).
\end{eqnarray*}
The long-range dependence follows by noting that 
\begin{eqnarray*}
\lefteqn{\E\left[\left(W_{t_0+\delta}-W_{t_0}\right)\left(B^H_{t+\delta}-B^H_t\right)\right]}\\
&=&
\int_{t_0}^{t_0+\delta}
\left[K_H(t+\delta,u)-K_H(t,u)\right]\, \d u \\
&=&
\int_{t_0}^{t_0+\delta}
\int_{t}^{t+\delta}\frac{\partial K_H}{\partial v}(v,u)\, \d v\, \d u \\
&\sim&
\delta^2\frac{\partial K_H}{\partial t}(t,t_0) \\
&=& \delta^2 c(H) \left(\frac{t}{t_0}\right)^{H-\frac12}\frac{1}{(t-t_0)^{\frac32-H}}, 
\end{eqnarray*}
which is the same power decay law, namely $t^{2H-2}$, as the fBm part has:  $\rho_{B^H}(t_0,\delta;t)$.
\end{proof}

Unlike the case of independent summands mfBm with $a=b$ and $H\in(3/4,1)$ (see Cheridito \cite{Cheridito-2001}), the ccmfBm is not equivalent to a Bm in any range of $H\in(1/2,1)$:

\begin{pro}
Let $b\ne 0$. Then the law of the ccmfBm on $[0,T]$ is singular to the law of any multiple of Bm on $[0,T]$.
\end{pro}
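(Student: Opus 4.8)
The plan is to invoke the Feldman--Hájek dichotomy: two centred Gaussian measures on $C[0,T]$ are either equivalent or mutually singular, so it suffices to rule out equivalence of the law of $X$ with the law of $\sigma W$ for every $\sigma\in\mathbb R$ (all Brownian motions have the same law, so ``a multiple of Bm'' is exactly the family $\{\sigma W:\sigma\in\mathbb R\}$, and I take $W$ to be the driving Bm of Definition~\ref{dfn:ccmfbm}). I would first dispose of the case $\sigma^2\ne a^2$, which includes $\sigma=0$: by Proposition~\ref{pro:basic}(ii) the quadratic variation along dyadic partitions is a Borel functional of the path equal to $a^2t$ almost surely under the law of $X$ and to $\sigma^2t$ almost surely under the law of $\sigma W$, so these measures are carried by disjoint sets and are mutually singular. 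Using the symmetry $W\mapsto-W$ it then remains to show that the law of $X$ is not equivalent to the law of $aW$, and I may assume $a>0$.

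For this critical case I would use the covariance form of the Feldman--Hájek criterion. Write $X_t=\int_0^t\bigl(a+bK_H(t,s)\bigr)\,\d W_s$, which exhibits $X$ as a Gaussian Volterra process driven by $W$; since $K_H(t,t)=0$ there is no jump on the diagonal, and a short differentiation gives that on $L^2[0,T]$ one has $\mathcal K_0^{-1}\mathcal K=I+\mathcal L$, where $(\mathcal Kf)(t)=\int_0^t(a+bK_H(t,s))f(s)\,\d s$, $(\mathcal K_0f)(t)=a\int_0^t f(s)\,\d s$, and $\mathcal L$ is the Volterra operator with kernel $\frac ba\,\frac{\partial K_H}{\partial t}(t,s)\mathbf 1_{\{s<t\}}$. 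Since $\mathcal K_0\mathcal K_0^*$ and $\mathcal K\mathcal K^*$ are the covariance operators of $aW$ and $X$, the criterion says that equivalence would force $\mathcal L+\mathcal L^*+\mathcal L\mathcal L^*$ to be Hilbert--Schmidt; equivalently, differentiating \eqref{eq:cov} twice, the correction kernel $\frac ba\,\frac{\partial K_H}{\partial t}(t\vee s,t\wedge s)+\frac{b^2}{a^2}H(2H-1)|t-s|^{2H-2}$ would have to belong to $L^2([0,T]^2)$.

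The remaining step is a diagonal estimate. By the computation of $\frac{\partial K_H}{\partial t}$ carried out in the proof of Proposition~\ref{pro:basic}(iii),
\[
\frac{\partial K_H}{\partial t}(t,s)=c(H)\Bigl(\frac ts\Bigr)^{H-\frac12}\frac{1}{(t-s)^{\frac32-H}},
\]
which behaves like $(t-s)^{H-\frac32}$ as $s\uparrow t$ and is therefore \emph{not} square-integrable over $\{s<t\}$ because $2H-3<-1$ for every $H\in(\frac12,1)$; hence $\mathcal L$ itself is not Hilbert--Schmidt. The only other term of the correction kernel, coming from the $b^2R_H$ part, is of order $(t-s)^{2H-2}$; since $H>\frac12$ gives $H-\frac32<2H-2$, the cross-term contribution is strictly the more singular one, so no cancellation can occur and the whole correction kernel is of order $(t-s)^{H-\frac32}$ near the diagonal, hence not in $L^2([0,T]^2)$. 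Thus the Feldman--Hájek operator fails the Hilbert--Schmidt condition, the law of $X$ is not equivalent to the law of $aW$, and by the dichotomy it is singular to it; combined with the first step this settles every $\sigma$.

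The points I expect to be delicate are the operator bookkeeping in the second step --- injectivity of $\mathcal K_0$ and $\mathcal K$, boundedness and quasi-nilpotency of the weakly singular Volterra operator $\mathcal L$ (so that $I+\mathcal L$ is boundedly invertible and the Cameron--Martin spaces coincide), and the identity $\mathcal K_0^{-1}\mathcal K=I+\mathcal L$, which requires care with the $t\wedge s$ in \eqref{eq:cov} when one differentiates --- and then confirming that the $\mathcal L\mathcal L^*$ summand is genuinely subordinate to $\mathcal L+\mathcal L^*$ near the diagonal, so that the Hilbert--Schmidt property cannot be restored by a cancellation. The conceptual content is that, by Proposition~\ref{pro:basic}(i)--(ii), $X$ agrees with $aW$ at all small scales (Brownian Hölder index and quadratic variation), so singularity must be detected by this intermediate-scale correction kernel; and it is precisely the \emph{cross} term, present only because the two summands are driven by a single $W$, that produces the non-integrable singularity $(t-s)^{H-\frac32}$ --- which is why singularity holds for \emph{all} $H\in(\frac12,1)$, unlike in Cheridito's independent-summands case. (A shorter route, observing that $X$ is not a semimartingale in its own filtration because $bB^H$ is not, becomes available once the transfer principle of Section~\ref{sect:transfer_principle} identifies that filtration with $\mathcal F^W$, but I would rather keep the argument self-contained.)
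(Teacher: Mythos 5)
Your argument is correct, and its analytic core is exactly the paper's: everything hinges on the fact that $\frac{\partial K_H}{\partial t}(t,s)=c(H)\left(\frac ts\right)^{H-\frac12}(t-s)^{H-\frac32}$ fails to be square-integrable over $\{s<t\}$ because $2H-3<-1$ for every $H\in(\frac12,1)$, combined with the Gaussian equivalent-or-singular dichotomy. Where you differ is the reduction to that computation. The paper invokes Hitsuda's representation theorem directly: writing $X_t/a=W_t+\frac ba\int_0^t\int_0^s\frac{\partial K_H}{\partial s}(s,u)\,\d W_u\,\d s$ (using $K_H(t,t-)=0$ and stochastic Fubini), equivalence to a multiple of Bm holds if and only if the Volterra drift kernel $\ell(s,u)=-\frac ba\frac{\partial K_H}{\partial s}(s,u)$ lies in $L^2([0,T]^2)$, so the criterion lands on the kernel $\frac{\partial K_H}{\partial t}$ alone, with no product term, no cancellation issue, and no separate treatment of mismatched variances. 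You instead rebuild the criterion from the Feldman--H\'ajek covariance form via the factorization $\mathcal K=\mathcal K_0(I+\mathcal L)$, which obliges you to (a) dispose of $\sigma^2\ne a^2$ by the quadratic-variation functional (fine, using Proposition~\ref{pro:basic}(ii) along a fixed partition sequence), (b) justify the operator bookkeeping ($\mathcal K_0$ injective with dense range so that conjugation by the polar unitary transfers the Hilbert--Schmidt condition, boundedness of $\mathcal L$, which is Lemma~\ref{lmm:bounded} in adjoint form), and (c) rule out cancellation between $\mathcal L+\mathcal L^*$ and $\mathcal L\mathcal L^*$, which you do correctly by the exponent comparison $H-\frac32<2H-2$ (after identifying the $\mathcal L\mathcal L^*$ kernel with $\frac{b^2}{a^2}H(2H-1)|t-s|^{2H-2}$, valid since $K_H(s,s)=0$, and restricting to a diagonal strip bounded away from $s=0$ so the factor $(t/s)^{H-\frac12}$ is harmless). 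So your route is heavier but self-contained, and it makes visible the structural point the paper leaves implicit: it is the cross term, present only because one Bm drives both summands, that destroys equivalence for all $H\in(\frac12,1)$, whereas the purely fractional term $|t-s|^{2H-2}$ alone would reproduce Cheridito's $H>\frac34$ threshold for the independent mixture; the paper's Hitsuda route buys brevity at the price of citing a more specialized theorem.
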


\begin{proof}
If follows from Hitsuda \cite{Hitsuda-1968} and the It\^o-isometry that the ccmfBm is equivalent to a multiple of Brownian motion if and only if 
$$
\int_0^T\int_0^T \left[\frac{\partial K_H}{\partial t}(t,s)\right]^2\, \d s\,\d t < \infty.
$$
But 
\begin{eqnarray*}
\lefteqn{
\int_0^T\int_0^T \left[\frac{\partial K_H}{\partial t}(t,s)\right]^2\, \d s\,\d t
} \\
&=&
c_1(\alpha)^2\int_0^T\int_s^T
\left(\frac{t}{s}\right)^{2H-1}\frac{\d t}{(t-s)^{3- 2H}}
\, \d s  \\
&\ge&
c(H)^2 \int_0^T \int_s^T \frac{\d t}{(t-s)^{3-2H}}\, \d s \\
&=&
c(H)^2\int_0^T \int_0^{T-s} \frac{\d u}{u^{3-2H}} \, \d s \\
&=&
\infty
\end{eqnarray*}
due to the non-integrable singularity of $1/u^{3-2H}$ at $u=0$.

Since Gaussian laws are always either equivalent or singular, the claim follows.
\end{proof}

\section{Transfer principle}\label{sect:transfer_principle}

The transfer principle states that from the ccmfBm we can construct a Bm in a non-anticipative way (the inverse transfer principle) and then represent the ccmfBm in a non-anticipative way by using the constructed Bm (the direct transfer principle). We consider processes on a compact time interval $[0,T]$ in this section.  

Let $L^2=L^2([0,T])$ and $\|\cdot\|_2 = \|\cdot\|_{L^2}$.

For a kernel $K\colon[0,T]^2\to\mathbb{R}$ its associated operator is
$$
\K f(t) = \int_0^T f(s)K(t,s)\, \d u.
$$
The adjoint associated operator $\K^*$ of a kernel $K$ is defined by linearly extending the relation
\begin{equation}\label{eq:gen_Kstar}
\K^* \1_t(s) = K(t,s),
\end{equation}
where $\1_t=\1_{[0,t)}$ is the indicator function.

Since the kernel $K_H(t,s)$ is differentiable in $t$ and $K_H(t,t-)=0$, its adjoint associated operator can be written as
\begin{equation}\label{eq:Kstar}
\K_H^* f(t) = \int_t^T f(u)\frac{\p K_H}{\p u}(u,t)\, \d u.
\end{equation}
Indeed, to verify formula \eqref{eq:Kstar} it is enough to check that it satisfies the relation \eqref{eq:gen_Kstar}.

Let $\Lambda$ be the closure of the indicator functions $\1_t$, $t\in [0,T]$, under the inner product generated by the relation
$$
{\la \1_t,\1_s\ra}_\Lambda = R(t,s).
$$
 
Let $\H_1$ be the linear space, or first chaos, of $X$, i.e., the closure of the random variables $X_t$, $t\in[0,T]$, in $L^2(\Omega)$.

For $f\in\Lambda$ the abstract Wiener integral 
$$
\int_0^T f(t)\, \d X_t
$$
is the image of the isometry $\1_t\mapsto X_t$ from $\Lambda$ to $\H_1$.

Denote $L(t,s) = a\1_t(s) + b K_H(t,s)$ and let $\L$ and $\L^*$ be the associated and adjoint associated operators of $L$.  

\begin{lmm}\label{lmm:bounded}
$\L^*$ is a bounded operator on $L^2$ and it can be represented as
\begin{eqnarray}\label{eq:mg-operator}
\L^*f(t) &=& af(t) + b\int_t^T f(u)\frac{\partial K_H}{\partial u}(u,t)\, \d u \\ \nonumber
&=& af(t) + \frac{bc(H)}{t^{H-\frac12}}\int_t^T f(u)\frac{u^{H-\frac12}}{(u-t)^{\frac32-H}}\, \d u.
\end{eqnarray}
\end{lmm}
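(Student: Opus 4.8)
The plan is to verify the two asserted formulas for $\L^*$ and then prove boundedness by splitting $\L^*$ into its Brownian part and its fractional part. First I would observe that by linearity $L(t,s) = a\1_t(s) + bK_H(t,s)$ gives $\L^* = a\,\mathrm{Id} + b\,\K_H^*$, since the adjoint associated operator is defined by linearly extending $\K^*\1_t(s) = K(t,s)$, and the identity operator is the adjoint associated operator of the kernel $\1_t(s)$ (because $\la \1_t,\1_s\ra_{L^2} = t\wedge s$ corresponds to the kernel whose associated adjoint sends $\1_t$ to $\1_t$). Then the first displayed line of \eqref{eq:mg-operator} is immediate from formula \eqref{eq:Kstar} for $\K_H^*$, and the second line follows by substituting the explicit kernel \eqref{eq:mg-kernel}: differentiating $K_H(u,t)$ in its first argument $u$ under the integral-defining expression gives $\frac{\partial K_H}{\partial u}(u,t) = c(H) t^{-(H-1/2)} u^{H-1/2}(u-t)^{-(3/2-H)}$ directly from the fundamental theorem of calculus applied to \eqref{eq:mg-kernel}.

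The substantive point is boundedness of $\K_H^*$ on $L^2([0,T])$. Since $a\,\mathrm{Id}$ is trivially bounded, it suffices to bound the operator $f \mapsto \int_t^T f(u)\, k(u,t)\, \d u$ with $k(u,t) = c(H) (u/t)^{H-1/2}(u-t)^{-(3/2-H)}$. The plan is to invoke a weighted Schur test (or, equivalently, recognize this as a fractional-integration-type operator). Writing $\alpha = 3/2 - H \in (1/2,1)$, the kernel behaves like $(u-t)^{-\alpha}$ near the diagonal up to the bounded-away-from-singularity weight $(u/t)^{H-1/2}$, which is problematic only as $t\to 0$. I would therefore test against the weight $w(t) = t^{-\gamma}$ for a suitable $\gamma$: estimate $\int_0^u k(u,t)\, t^{-\gamma}\, \d t$ and $\int_t^T k(u,t)\, u^{-\gamma}\, \d u$ and check both are bounded by a constant times $u^{-\gamma}$ and $t^{-\gamma}$ respectively. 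The first integral is $c(H) u^{H-1/2}\int_0^u t^{1/2-H-\gamma}(u-t)^{H-3/2}\,\d t$, which by the Beta-integral substitution $t = us$ equals $c(H) u^{-\gamma} B(3/2 - H - \gamma, H - 1/2)$, finite provided $\gamma < 3/2 - H$ (and $H>1/2$); the second integral is handled analogously and requires $\gamma > H - 1/2$. Hence any $\gamma \in (H - 1/2, 3/2 - H)$ works — this interval is nonempty exactly because $H < 1$ — and the Schur test yields $\|\K_H^*\|_{L^2 \to L^2} < \infty$.

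An alternative, perhaps cleaner, route avoids Schur entirely: one recognizes from the covariance that $\K_H^*$ is (essentially) the adjoint of the operator $\K_H$ with $\K_H \1_t(s) = K_H(t,s)$, and $\K_H$ maps $L^2([0,T])$ isometrically onto the Cameron--Martin space of $B^H$; the boundedness of $\K_H^*$ on $L^2$ is then equivalent to the continuous embedding of the fBm Cameron--Martin space into $L^2$, which is classical for $H \in (1/2,1)$ (indeed $\K_H^* = \K_H$ up to the time reversal $t \mapsto T - t$ in the Molchan--Golosov setting, a known fact). I would state this as the conceptual reason and give the Schur computation as the self-contained verification.

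The main obstacle I anticipate is purely the bookkeeping near $t = 0$: the weight $(u/t)^{H-1/2}$ blows up there, so a naive Schur test with constant weight fails, and one must choose the power weight $t^{-\gamma}$ carefully so that the two Beta integrals converge simultaneously. Everything else — the algebraic identification $\L^* = a\,\mathrm{Id} + b\,\K_H^*$ and the explicit differentiation of the Molchan--Golosov kernel — is routine. I would also remark that the constant $c(H)$ contains the factor $(H - 1/2)$, so all formulas degenerate correctly to the Brownian case $\L^* f = af$ when $H = 1/2$, which serves as a sanity check on the derivation.
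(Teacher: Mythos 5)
Your proof is correct in substance but takes a genuinely different route for the key step. The identification $\L^* = a\,\I^* + b\,\K_H^*$, the verification of \eqref{eq:mg-operator} on step functions via \eqref{eq:Kstar} and the explicit differentiation of \eqref{eq:mg-kernel}, and the reduction to boundedness of $\K_H^*$ all match the paper. For boundedness, however, the paper does not use a Schur test: it computes, for step functions,
$\|\K_H^* f\|_2^2 = \int_0^T\int_0^T f(t)f(s)\,\frac{\partial^2 R_H}{\partial s\,\partial t}(t,s)\,\d s\,\d t = H(2H-1)\int_0^T\int_0^T \frac{f(t)f(s)}{|t-s|^{2-2H}}\,\d s\,\d t$,
and then applies $2|f(t)f(s)|\le f(t)^2+f(s)^2$ and symmetry, obtaining the explicit bound $H(2H-1)\frac{T^{2H-1}}{H-\frac12}\|f\|_2^2$ in three lines. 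Your weighted Schur test with $w(t)=t^{-\gamma}$, $\gamma\in(H-\tfrac12,\tfrac32-H)$, works and has the merit of acting directly on the kernel (so it would apply to other Volterra kernels with the same homogeneity, without invoking the covariance identity $\int_0^{t\wedge s}K_H(t,u)K_H(s,u)\,\d u = R_H(t,s)$), but it is longer and the constants are less explicit; the paper's argument trades that generality for brevity and an explicit operator-norm bound. Your "conceptual" alternative (Cameron--Martin embedding) is fine as a remark but, as you note, is not self-contained.

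Two small points to tidy in your Schur computation. First, the Beta substitution gives
$c(H)\,u^{H-\frac12}\int_0^u t^{\frac12-H-\gamma}(u-t)^{H-\frac32}\,\d t = c(H)\,B\!\left(\tfrac32-H-\gamma,\,H-\tfrac12\right)u^{H-\frac12-\gamma}$,
not $c(H)B(\cdot,\cdot)u^{-\gamma}$; since $H>\tfrac12$ and $u\le T$ this is still $\le C_T\,u^{-\gamma}$, so the conclusion stands, but the stated equality is off by the factor $u^{H-\frac12}$. Second, the companion condition $\int_t^T k(u,t)\,u^{-\gamma}\,\d u\le C\,t^{-\gamma}$ is not literally a Beta integral (the domain $[t,T]$ breaks the scaling), so "handled analogously" hides a near-diagonal/far-field splitting; it does hold for every $\gamma>H-\tfrac12$ on a compact interval, with constants depending on $T$, but you should write that estimate out rather than appeal to symmetry with the first one.
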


\begin{proof}
Let us first note that $\L^* = a\I^* + b\K^*_H$, where $\I^*$ is the identity operator and $\K_H^*$ is the adjoint associated operator defined in \eqref{eq:Kstar}, where $K_H(t,s)$ is the Molchan--Golosov kernel $K_H$ defined in \eqref{eq:mg-kernel}.  Since
$$
{\| \L^*f\|}_2 \le |a|{\|f\|}_2 + |b|{\| \K^*_H f\|}_2,
$$
$\L^*$ is bounded in $L^2$, if  $\K^*_H$ is bounded in $L^2$. 
Also, we note that \eqref{eq:mg-operator} is true for step functions.  So, if $\L^*$ is bounded in $L^2$, the formula \eqref{eq:mg-operator} extends to all functions in $L^2$.
Finally we note that $\K^*_H$ is bounded on $L^2$, because (for step functions $f$)
\begin{eqnarray*}
{\|\K^*_H f\|}_2^2
&=&
\int_0^T \left[\K_H^* f(t)\right]^2\, \d t \\
&=&
\int_0^T\int_0^T f(t)f(s)\frac{\partial^2 R_H}{\partial s \partial t}(t,s)\, \d s \d t \\
&=&
H(2H-1)
\int_0^T\int_0^T \frac{f(t)f(s)}{|t-s|^{2-2H}}\, \d s \d t\\
&\le&
H(2H-1)\int_0^T\int_0^T \frac{f(t)^2}{|t-s|^{2-2H}}\, \d s \d t \\
&\le&
H(2H-1)\frac{T^{2H-1}}{H-\frac12}{\|f\|}^2_2, 	
\end{eqnarray*}		
where we have used the elementary estimate 
$$
2|f(t)f(s)| \le f(t)^2 + f(s)^2
$$
and symmetry.
\end{proof}

\begin{lmm}\label{lmm:inversion}
For each $t\in[0,T]$, the integral equation 
\begin{equation}\label{eq:l-inv}
\1_t(s) = a L^{-1}(t,s) + b\int_s^T L^{-1}(t,u)\frac{\partial K_H}{\partial u}(u,s)\, \d u
\end{equation}
admits the unique $L^2$-solution given by
\begin{equation}\label{eq:l-inv-kernel}
L^{-1}(t,s) = \frac{1}{a}\1_t(s)
+ \frac{1}{a}\sum_{k=1}^\infty (-1)^k \left(\frac{b}{a}\right)^k
\gamma_k(t,s)
\end{equation}
where 
$$
\gamma_k(t,s) =
\frac{c(H)^k\Gamma(H-\frac12)^k}{\Gamma\left(k\left(H-\frac12\right)\right)}
\frac{1}{s^{H-\frac12}}\int_s^t u^{H-\frac12}(u-s)^{k(H-\frac12)-1}\, \d u.
$$
\end{lmm}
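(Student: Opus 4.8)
The plan is to read the integral equation \eqref{eq:l-inv} as the operator identity $\L^* g = \1_t$ for the unknown function $g = L^{-1}(t,\cdot)$, since by \eqref{eq:Kstar} its left-hand side is precisely $\L^* g = a g + b\K_H^* g$. Writing $\alpha = H - \tfrac12 \in (0,\tfrac12)$ and factoring $\L^* = a\bigl(\I^* + \tfrac ba\K_H^*\bigr)$, the natural candidate for the inverse is the Neumann series
\begin{equation*}
g \;=\; \frac1a\sum_{k=0}^\infty\Bigl(-\frac ba\Bigr)^k(\K_H^*)^k\1_t ,
\end{equation*}
and the whole point of the lemma is that the $k$-th term equals $\tfrac1a(-b/a)^k\gamma_k(t,\cdot)$, i.e. $(\K_H^*)^k\1_t = \gamma_k(t,\cdot)$. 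So the proof splits into (a) proving this identification, (b) showing the series converges in $L^2$ and solves \eqref{eq:l-inv}, and (c) uniqueness.

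For (a) I would induct on $k$. The cases $k=0,1$ are immediate: $\gamma_0 = \1_t$ by the empty-integral convention, while $\K_H^*\1_t = K_H(t,\cdot)$ by \eqref{eq:gen_Kstar}, and $\gamma_1(t,\cdot) = K_H(t,\cdot)$ because the Gamma-factors in $\gamma_1$ cancel and $\alpha - 1 = -(\tfrac32 - H)$, matching \eqref{eq:mg-kernel}. For the inductive step I would use $\tfrac{\p K_H}{\p u}(u,s) = c(H)\,s^{-\alpha}u^{\alpha}(u-s)^{\alpha-1}$ together with \eqref{eq:Kstar} to obtain
\begin{equation*}
\K_H^*\gamma_k(t,\cdot)(s) \;=\; c(H)^{k+1}\,\frac{\Gamma(\alpha)^k}{\Gamma(k\alpha)}\,s^{-\alpha}\int_s^t (u-s)^{\alpha-1}\!\int_u^t v^{\alpha}(v-u)^{k\alpha-1}\,\d v\,\d u ,
\end{equation*}
then exchange the order of integration over the region $\{s\le u\le v\le t\}$ and evaluate the resulting inner integral by the substitution $u = s + (v-s)w$, which converts it into the Beta integral $(v-s)^{(k+1)\alpha-1}\int_0^1 w^{\alpha-1}(1-w)^{k\alpha-1}\,\d w = (v-s)^{(k+1)\alpha-1}\,\Gamma(\alpha)\Gamma(k\alpha)/\Gamma((k+1)\alpha)$. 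The Gamma-factors then telescope to give exactly $\gamma_{k+1}(t,s)$. This step — carrying out the Fubini exchange and normalising the Beta integral so that the $\Gamma$'s collapse correctly — is the main obstacle; everything else is routine bookkeeping.

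For (b) I would bound $\gamma_k$ crudely, using $u\le t$, by $\gamma_k(t,s)\le \frac{c(H)^k\Gamma(\alpha)^k}{\Gamma(k\alpha)}\,t^{\alpha}\,\frac{(t-s)^{k\alpha}}{k\alpha}\,s^{-\alpha}$; since $2\alpha<1$, the function $s\mapsto s^{-\alpha}$ belongs to $L^2([0,T])$, so $\|\gamma_k(t,\cdot)\|_2 \le C_{t,T}\,\bigl(c(H)\Gamma(\alpha)T^{\alpha}\bigr)^k/\bigl(k\alpha\,\Gamma(k\alpha)\bigr)$, and the super-geometric growth of $\Gamma(k\alpha)$ makes $\sum_k|b/a|^k\|\gamma_k(t,\cdot)\|_2<\infty$. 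Hence the series \eqref{eq:l-inv-kernel} converges absolutely in $L^2$, so $L^{-1}(t,\cdot)\in L^2$. Applying the bounded operator $\L^* = a\I^* + b\K_H^*$ (Lemma \ref{lmm:bounded}) term by term to the partial sums and using $\K_H^*\gamma_k = \gamma_{k+1}$, the sum telescopes,
\begin{equation*}
\L^*\!\left(\frac1a\sum_{k=0}^{N}\Bigl(-\frac ba\Bigr)^k\gamma_k\right) \;=\; \gamma_0 - \Bigl(-\frac ba\Bigr)^{N+1}\gamma_{N+1} \;\longrightarrow\; \gamma_0 = \1_t \qquad (N\to\infty),
\end{equation*}
so $L^{-1}(t,\cdot)$ solves \eqref{eq:l-inv}.

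For (c), any two $L^2$-solutions differ by an $h$ with $h = -\tfrac ba\K_H^* h$, hence $h = (-b/a)^n(\K_H^*)^n h$ for every $n$. The same Fubini--Beta computation shows $(\K_H^*)^n$ is the integral operator with kernel $c(H)^n\frac{\Gamma(\alpha)^n}{\Gamma(n\alpha)}\,v^{\alpha}s^{-\alpha}(v-s)^{n\alpha-1}\1_{\{s<v\}}$, which for $n > 1/(2H-1)$ is Hilbert--Schmidt with $\|(\K_H^*)^n\|_{\mathrm{op}}$ of order $\bigl(c(H)\Gamma(\alpha)T^{\alpha}\bigr)^n/\Gamma(n\alpha)$ up to polynomial factors; therefore $|b/a|^n\|(\K_H^*)^n\|_{\mathrm{op}}\to0$ and $h = 0$. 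Equivalently, $\K_H^*$ is a weakly singular Volterra operator, hence quasi-nilpotent, so $\I^* + \tfrac ba\K_H^*$ is invertible and the solution is unique.
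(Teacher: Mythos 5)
Your proposal is correct and follows essentially the same route as the paper: the paper also rewrites \eqref{eq:l-inv} as a (anti-)Volterra equation of the second kind, inverts it by the Liouville--Neumann series, and identifies the iterated kernels with $\gamma_k$ by induction using exactly the Beta-integral identity $\int_s^u (v-s)^{k\alpha-1}(u-v)^{\alpha-1}\,\d v = \frac{\Gamma(k\alpha)\Gamma(\alpha)}{\Gamma((k+1)\alpha)}(u-s)^{(k+1)\alpha-1}$ with $\alpha=H-\frac12$. Your additional explicit $L^2$-convergence bound on $\gamma_k$ and the quasi-nilpotence argument for uniqueness are fine and in fact supply details the paper only gestures at via Lemma \ref{lmm:bounded}.
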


\begin{proof}
Denote 
$$
G(s,u) = -\frac{b c(H)}{a}
\frac{u^{H-\frac12}}{s^{H-\frac12}(u-s)^{\frac32-H}}.
$$
Then \eqref{eq:l-inv} is the anti-Volterra equation of the second kind
$$
\frac{1}{a}\1_t(s) = L^{-1}(t,s) - \int_s^t L^{-1}(t,u)G(s,u)\, \d u.
$$
Since $\L^* =  a\I^* - a\G$,  Lemma \ref{lmm:bounded} implies that the solution of the equation \eqref{eq:l-inv} is given by the $L^2$-convergent Liouville--Neumann series
\begin{equation}\label{eq:l-inv-kernel-aux}
L^{-1}(t,s) = \sum_{k=1}^\infty \G^k\left[\frac{1}{a}\1_t\right](s),
\end{equation}
where $\G^0$ is the identity operator and $\G^{k+1} =\G\G^{k}$.  
Formula \eqref{eq:l-inv-kernel} follows from formula \eqref{eq:l-inv-kernel-aux} by induction by using the formula
$$
\int_s^u (v-s)^{k\alpha-1}(u-v)^{\alpha-1}\, \d v
=
\frac{\Gamma(k\alpha)\Gamma(\alpha)}{\Gamma((k+1)\alpha)}(u-s)^{(k+1)\alpha-1},
$$ 
where $\alpha=H-\frac12$.
\end{proof}

\begin{rem}
The series \eqref{eq:l-inv-kernel} converges fast.  Indeed, by using the Stirling's approximation
$$
\Gamma\left(k\left(H-\frac12\right)\right)
\sim
\sqrt{2\pi} \left(k\left(H-\frac12\right)\right)^{k\left(H-\frac12\right)-\frac12}
\e^{-k\left(H-\frac12\right)}
$$
and the estimate
\begin{eqnarray*}
\lefteqn{\frac{1}{s^{H-\frac12}}\int_s^t u^{H-\frac12}(u-s)^{k\left(H-\frac12\right)-1}\, \d u} \\
&\le&
\left(\frac{t}{s}\right)^{H-\frac12}\frac{1}{k\left(H-\frac12\right)}(t-s)^{k\left(H-\frac12\right)}
\end{eqnarray*}
we obtain
\begin{eqnarray*}
\gamma_k(t,s) \le
\frac{C^k}
{\left(k\left(H-\frac12\right)\right)^{k\left(H-\frac12\right)+\frac12}}
\left(\frac{t}{s}\right)^{H-\frac12}(t-s)^{k\left(H-\frac12\right)},
\end{eqnarray*}
which also shows that \eqref{eq:l-inv-kernel} converges uniformly for all $s\in[\varepsilon,T]$.
\end{rem}

\begin{figure}[H]\label{fig:picture_summands}
\includegraphics[width=\textwidth]{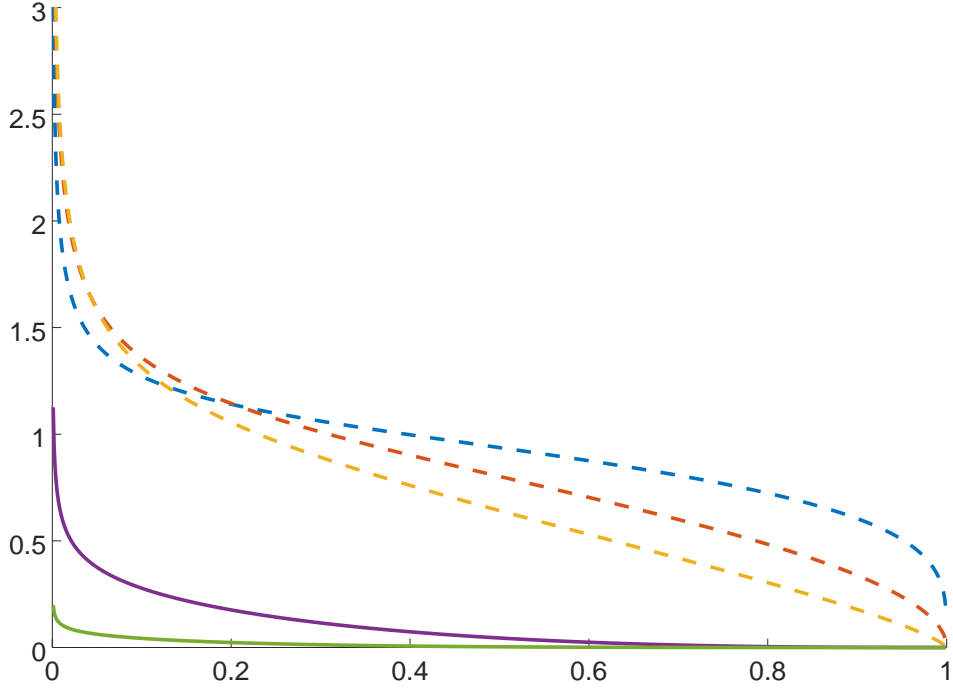}
\caption{The summands $\gamma_k(t,s)$ for $k=1,2,3$ (dashed lines) and $k=10,15$ (solid lines) with $t=1$, $a=b=1$ and $H=0.75$.}
\end{figure}

From Lemma \ref{lmm:inversion} we obtain directly the following basic form of the transfer principle that is the main result of this paper.

\begin{thm}\label{thm:igvp}
Let $L^{-1}(t,s)$ be given by  \eqref{eq:l-inv-kernel}.
The ccmfBm $X$ is an invertible Gaussian Volterra process in the sense that the process $W$ defined as the abstract Wiener integral
$$
W_t = \int_0^t L^{-1}(t,s)\, \d X_s
$$
is a Bm, and the ccmfBm can be reconstructed from it by the Wiener integral
$$
X_t = \int_0^t L(t,s)\, \d W_s,
$$ 
where 
$$
L(t,s) = a\1_t(s) + bK_H(t,s)
$$
and $K_H(t,s)$ is the Molchan--Golosov kernel given by \eqref{eq:mg-kernel}.
\end{thm}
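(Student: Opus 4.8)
The plan is to read Theorem~\ref{thm:igvp} as saying that the bounded operator $\L^*$ of Lemma~\ref{lmm:bounded} is invertible, with inverse generated by the kernel $L^{-1}$ of Lemma~\ref{lmm:inversion}, and then to transport this operator identity from $L^2$ to the level of Wiener integrals. The direct representation is essentially built in: by Definition~\ref{dfn:ccmfbm} and the Molchan--Golosov representation~\eqref{eq:mg}, $X_t = aW_t + bB^H_t = \int_0^t\big[a\1_t(s)+bK_H(t,s)\big]\,\d W_s = \int_0^t L(t,s)\,\d W_s$, and this Wiener integral against the Bm $W$ is legitimate because $L(t,\cdot)=\L^*\1_t\in L^2$ by Lemma~\ref{lmm:bounded}. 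I would also note that the defining relation ${\la\1_t,\1_s\ra}_\Lambda = R(t,s)$, combined with $R(t,s)=\int_0^{t\wedge s}L(t,u)L(s,u)\,\d u={\la\L^*\1_t,\L^*\1_s\ra}_{L^2}$, says precisely that $\L^*$ realises the isometric identification of $\Lambda$ with a subspace of $L^2$; consequently the abstract Wiener integral against $X$ obeys the transfer-of-integrals identity $\int_0^T f(s)\,\d X_s = \int_0^T \L^*f(s)\,\d W_s$ for $f\in\Lambda$. This identity is the workhorse of the proof.

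Next I would check that $W_t:=\int_0^t L^{-1}(t,s)\,\d X_s$ is well defined. Since $\L^*$ is bounded on $L^2$, every step function $f$ obeys $\|f\|_\Lambda=\|\L^*f\|_2\le\|\L^*\|\,\|f\|_2$, so $L^2$ embeds continuously into $\Lambda$; as $L^{-1}(t,\cdot)\in L^2$ by Lemma~\ref{lmm:inversion}, it lies in $\Lambda$ and $W_t$ is a genuine element of $\H_1$. To identify $W$ with the original Brownian motion I would use that each $\gamma_k(t,\cdot)$, hence $L^{-1}(t,\cdot)$, is supported on $[0,t)$: applying the transfer-of-integrals identity to $f=L^{-1}(t,\cdot)$ gives $W_t=\int_0^T\L^*\big[L^{-1}(t,\cdot)\big](s)\,\d W_s$, and computing $\L^*\big[L^{-1}(t,\cdot)\big]$ by~\eqref{eq:mg-operator} and invoking the integral equation~\eqref{eq:l-inv} shows that this function is exactly $\1_t$. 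Hence $W_t=\int_0^T\1_t(s)\,\d W_s$, so $W$ coincides with the original driving Bm and is in particular a Bm. Plugging this $W$ back into the direct representation of the first step yields the reconstruction $X_t=\int_0^t L(t,s)\,\d W_s$, completing the proof.

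All analytic content --- the boundedness, the explicit Liouville--Neumann series, the integral equation --- is already delivered by Lemmas~\ref{lmm:bounded} and~\ref{lmm:inversion}, so the only remaining work is bookkeeping: making the Wiener integral against $X$ legitimate on all of $\Lambda$ (hence on $L^2$), passing freely between the kernel equation~\eqref{eq:l-inv} and the operator identity $\L^*\big[L^{-1}(t,\cdot)\big]=\1_t$, and using the Volterra support of $L^{-1}$ to replace the upper limit $T$ by $t$ where convenient. I expect this bookkeeping to be the main (and rather mild) obstacle; once the two lemmas are in hand there is no genuine analytic difficulty left, which is why the theorem follows ``directly'' from them.
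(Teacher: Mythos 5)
Your proposal is correct and follows the same route the paper intends: the paper states that Theorem~\ref{thm:igvp} follows directly from Lemma~\ref{lmm:inversion} (with Lemma~\ref{lmm:bounded} in the background), and your argument simply fills in the routine bookkeeping — the isometric transfer identity $\int f\,\d X=\int\L^*f\,\d W$, the $L^2$-to-$\Lambda$ embedding from boundedness of $\L^*$, and the identification $\L^*[L^{-1}(t,\cdot)]=\1_t$ via \eqref{eq:l-inv}. No gaps; this is exactly the intended proof spelled out.
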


\begin{figure}[H]\label{fig:picture_kernel_L}
\includegraphics[width=\textwidth]{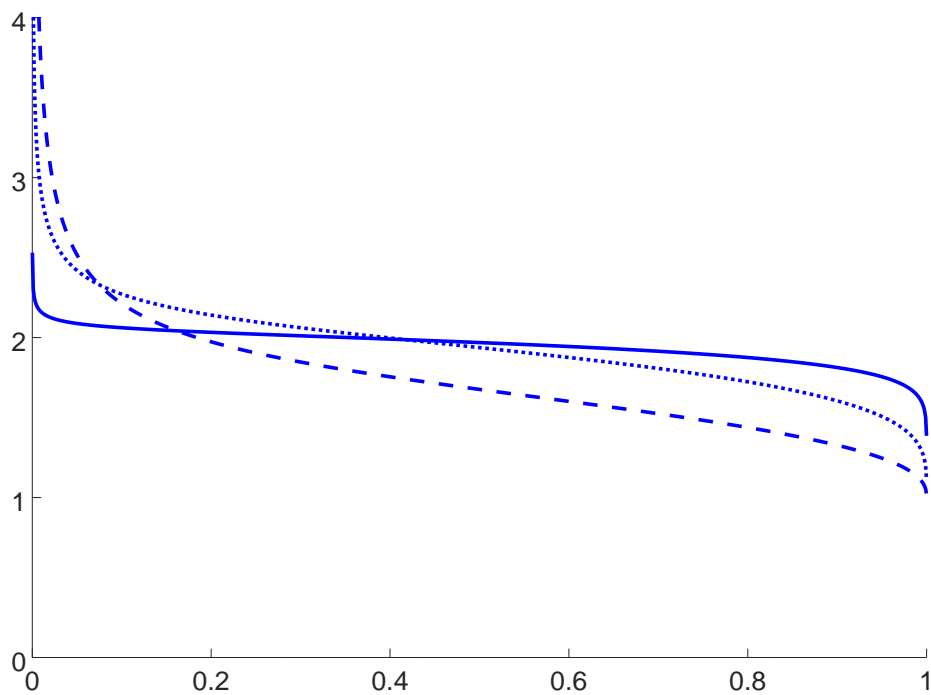}
\caption{The kernels $L(t,s)$ for $t=1$, $a=b=1$ and $H=0.6$ (solid line), $H=0.75$ (dotted line) and $H=0.9$ (dashed line).}
\end{figure}

\begin{figure}[H]
\includegraphics[width=\textwidth]{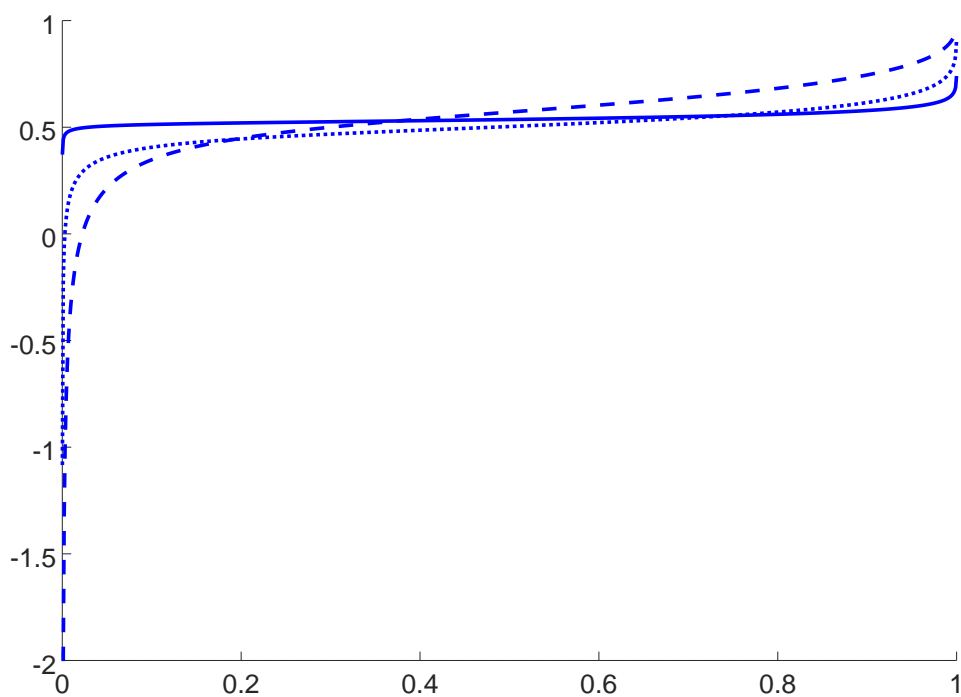}
\caption{The kernels $L^{-1}(t,s)$ for $t=1$, $a=b=1$ and $H=0.6$ (solid line), $H=0.75$ (dotted line) and $H=0.9$ (dashed line).}
\end{figure}

By Lemma \ref{lmm:bounded} and Lemma \ref{lmm:inversion} we have $\Lambda=L^2$. Therefore,
Theorem \ref{thm:igvp} extends to a transfer principle for deterministic integrands on $L^2$ with respect to a Bm and ccmfBm. Actually, we could extend the transfer principle to stochastic integrands by using Malliavin calculus and Skorokhod integration as explained in \cite{Sottinen-Viitasaari-2015,Sottinen-Viitasaari-2016b}.  However, we omit that extension here.

\begin{thm}\label{thm:transfer-principle}
Let $f\in L^2$. Let $X$ and $W$ be connected by Theorem \ref{thm:igvp}.  Then
\begin{eqnarray*}
\int_0^T f(t)\, \d X_t &=& \int_0^T \L^* f(t)\, \d W_t, \\
\int_0^T f(t)\, \d W_t &=& \int_0^T (\L^*)^{-1} f(t)\, \d X_t,
\end{eqnarray*}
where
\begin{eqnarray*}
\L^*f(t) &=&
af(t) + b\int_t^T f(s)\frac{\partial K_H}{\partial s}(s,t)\, \d s, \\
(\L^*)^{-1} f(t) &=& f(t)L^{-1}(T,t)
+ \int_t^T \left[f(s)-f(t)\right]L^{-1}(\d s,t).
\end{eqnarray*}
\end{thm}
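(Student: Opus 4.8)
The plan is to prove both identities first for indicator functions $f=\1_t$ and then extend by linearity and an $L^2$-density argument. For the first identity, the definition of the abstract Wiener integral together with Theorem~\ref{thm:igvp} gives
$$
\int_0^T \1_t(s)\,\d X_s \;=\; X_t \;=\; \int_0^T L(t,s)\,\d W_s \;=\; \int_0^T \L^*\1_t(s)\,\d W_s ,
$$
the last equality being just the defining relation \eqref{eq:gen_Kstar} for the adjoint associated operator applied to $L(t,s)=a\1_t(s)+bK_H(t,s)$. By linearity this yields $\int_0^T f\,\d X=\int_0^T \L^* f\,\d W$ for every step function $f$, and on step functions $\L^* f$ is given explicitly by \eqref{eq:mg-operator}.

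To pass to all $f\in L^2$ I would invoke that $\L^*$ is bounded on $L^2$ (Lemma~\ref{lmm:bounded}) and that $\Lambda=L^2$ with equivalent norms, as already observed after Theorem~\ref{thm:igvp}; the norm equivalence follows because, for step functions, $\|f\|_\Lambda^2=\E\bigl[(\int_0^T f\,\d X)^2\bigr]=\|\L^* f\|_2^2$ by the identity just established, so that the boundedness of $\L^*$ and of its inverse force $\|\cdot\|_\Lambda$ and $\|\cdot\|_2$ to be equivalent. Since $f\mapsto\int_0^T f\,\d X$ and $g\mapsto\int_0^T g\,\d W$ are isometries of $\Lambda$, respectively $L^2$, onto the common first chaos $\H_1$ (the two processes span the same first chaos by Theorem~\ref{thm:igvp}), taking $L^2$-limits of step functions propagates the first transfer identity to every $f\in L^2$.

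For the second identity I would note that $\L^*$ is a bounded bijection of $L^2$, its inverse being bounded by virtue of the Liouville--Neumann series of Lemma~\ref{lmm:inversion}; applying the first identity to $g=(\L^*)^{-1}f$ then gives $\int_0^T (\L^*)^{-1}f\,\d X=\int_0^T \L^* g\,\d W=\int_0^T f\,\d W$. It remains to identify $(\L^*)^{-1}$ with the stated closed form, i.e.\ with the operator
$$
Rf(t)\;=\;f(t)L^{-1}(T,t)+\int_t^T \bigl[f(s)-f(t)\bigr]\,L^{-1}(\d s,t),
$$
where $L^{-1}$ is the kernel of Lemma~\ref{lmm:inversion} and $L^{-1}(\d s,t)$ is the Lebesgue--Stieltjes measure in the first variable. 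I would verify $R=(\L^*)^{-1}$ on indicators: a short case analysis on the position of $u$, splitting the Stieltjes integral at $s=t$ and (when $u>t$) at $s=u$, gives $R\1_u=L^{-1}(u,\cdot)$, while $\L^*\bigl[L^{-1}(u,\cdot)\bigr]=\1_u$ is exactly the integral equation \eqref{eq:l-inv}. Hence $\L^* R\1_u=\1_u$ for every $u$, so $R$ agrees with $(\L^*)^{-1}$ on step functions, and a final density argument (both operators being bounded on $L^2$) finishes the proof.

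The step I expect to be the main obstacle is this last identification. One must track the jump of $s\mapsto L^{-1}(s,t)$ at $s=t$ coming from the indicator term $\frac{1}{a}\1_s(t)$ in \eqref{eq:l-inv-kernel}: the measure $L^{-1}(\d s,t)$ then carries an atom of mass $1/a$ at $s=t$, and the regularized integrand $f(s)-f(t)$ (rather than $f(s)$) is precisely what annihilates that atom; for $f$ merely in $L^2$ the pointwise values in $Rf$ are to be read as the usual H\"older-type regularization under which $R$ extends continuously to $L^2$. Checking convergence of the Stieltjes integral near $s=t$ uses that the series part of $L^{-1}$ contributes, via the Remark after Lemma~\ref{lmm:inversion}, a factor of order $(s-t)^{H-\frac12}$, whose $s$-derivative is integrable exactly because $H>\frac12$. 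Once the indicator case is settled the remaining extensions are routine given Lemmas~\ref{lmm:bounded} and~\ref{lmm:inversion}.
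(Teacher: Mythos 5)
Your proposal is correct and follows essentially the same route as the paper's own (much terser) proof: verify the identities on indicator/step functions via Theorem \ref{thm:igvp} and the integral equation \eqref{eq:l-inv} of Lemma \ref{lmm:inversion}, use the bounded variation of $s\mapsto L^{-1}(s,t)$ (with the atom of mass $1/a$ at $s=t$ handled by the regularization $f(s)-f(t)$) to make sense of the Stieltjes form of $(\L^*)^{-1}$, and then extend to all of $L^2$ using $\Lambda=L^2$ together with the boundedness and isometry properties of $\L^*$ from Lemmas \ref{lmm:bounded} and \ref{lmm:inversion}. The extra details you supply (the case analysis for $R\1_u=L^{-1}(u,\cdot)$ and the integrability of the derivative of the series part near $s=t$ for $H>\frac12$) are exactly the ``straightforward computations'' the paper leaves implicit.
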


\begin{proof}
Using \eqref{eq:l-inv-kernel} we can verify that the kernel $L^{-1}(t,s)$ is of bounded variation in $t$. Moreover, if $f$ is a step function, Theorem \ref{thm:transfer-principle} follows from Theorem \ref{thm:igvp} by linearity and straightforward computations. The claim now follows since $\Lambda=L^2$ and the operator $\L^*$ is an isometry on $L^2$ (see also \cite{Sottinen-Viitasaari-2016b} for details).
\end{proof}	

\begin{rem}
Unlike the independent mfBm introduced by Cheridito \cite{Cheridito-2001}, the ccmfBm does not have stationary increments.  Hence, it could be argued that the independent mfBm is more natural.  However, the inverse transfer principle for the ccmfBm has a rather explicit and convenient form.  For the independent mfBm the inverse transfer principle is implicit and involves solving integral equations numerically, see Cai et al. \cite{Cai-Chigansky-Kleptsyna-2016}.  Therefore, the ccmfBm is more convenient for applications.  We also note that the H\"older continuity properties, quadratic variation, and the long-range dependence of the independent mfBm and ccmfBm are the same.
\end{rem}

\section{Cameron--Martin--Girsanov--Hitsuda theorem}\label{sect:cmg}

In this section we show how the transfer principle of Theorem \ref{thm:transfer-principle} can be used to characterize Gaussian processes that are equivalent in law to the ccmfBm and to provide the corresponding Cameron--Martin--Girsanov theorem in the same way as in \cite{Sottinen-2004,Sottinen-Tudor-2006}.
In this section we consider process on a compact time interval $[0,T]$.

By the Hitsuda representation theorem \cite{Hitsuda-1968} a Gaussian process $\tilde W$ is equivalent to a Bm $W$ if and only if it can be represented
\begin{equation}\label{eq:tildeW}
\tilde W_t = W_t - \int_0^t \int_0^s \ell(s,u)\, \d W_u\, \d s - \int_0^t g(s)\, \d s
\end{equation}
for some $\ell\in L^2([0,T]^2)$ and $g\in L^2([0,T])$.  Here $W$ is a Bm that is constructed from $\tilde W$ by
\begin{equation}\label{eq:W}
W_t = \tilde W_t - \int_0^t\int_0^s \tilde\ell(s,u)\, \d\tilde W_u\, \d s
- \int_0^t\left[g(s)-\int_0^u \tilde\ell(s,u)\, \d u\right]\, \d s,
\end{equation}
where $\tilde\ell$, the resolvent of $\ell$, is given by
\begin{equation}\label{eq:tildel}
\tilde\ell(t,s) = \sum_{k=1}^\infty \ell^{k}(t,s), 
\end{equation}
where
\begin{eqnarray*}
\ell^{1}(t,s) &=& \ell(t,s), \\
\ell^{k+1}(t,s) &=& \int_s^t \ell(t,u)\ell^{k}(u,s)\, \d u.
\end{eqnarray*}
The likelihood ratio of $\tilde W$ over $W$ given observation $\F_t$ on the interval $[0,t]$ is
\begin{eqnarray}
\varphi_t &=& \nonumber
\frac{\d\tilde\P}{\d\P}\Big|\F_t \\
&=& \label{eq:lr1}
\exp\bigg\{\int_0^t\left[\int_0^s \ell(s,u)\d W_u + g(s)\right]\d W_s\\ \nonumber
& &-\frac12\int_0^t\left[\int_0^s\ell(s,u)\d W_u - g(s)\right]^2\d s\bigg\} \\
&=& \label{eq:lr2}
\exp\bigg\{\int_0^t\left[\int_0^s \tilde\ell(s,u)\d\tilde W_u + g(s) -\int_0^s \tilde\ell(s,u)g(u)\, \d u\right]\d \tilde W_s\\ \nonumber
& &-\frac12\int_0^t\left[\int_0^s\tilde\ell(s,u)\d\tilde W_u + g(s) -\int_0^s \tilde\ell(s,u)g(u)\, \d u\right]^2\d s\bigg\}
\end{eqnarray}

Let then $\tilde X$ be a Gaussian process.  Set
\begin{equation}\label{eq:tilde-tp1}
\tilde W_t = \int_0^t L^{-1}(t,s)\, \d\tilde X_s.
\end{equation}
By the transfer principle Theorem \ref{thm:transfer-principle} and Hitsuda representation theorem we then have the following Cameron--Martin--Girsanov--Hitsuda theorem.

\begin{pro}\label{pro:cmgh}
A Gaussian process $\tilde X$ is equivalent in law to a ccmfBm if and only if the process $\tilde W$ given by \eqref{eq:tilde-tp1} is of from \eqref{eq:tildeW} and in this case the process
\begin{equation}\label{eq:tilde-tp2}
X_t = \int_0^t L(t,s)\, \d W_s
\end{equation}
is a ccmfBm, where $W$ is a Bm constructed from $\tilde X$ via \eqref{eq:tilde-tp1} and \eqref{eq:W}.  
The likelihood ratio $\varphi_t$ between $\tilde X$ and $X$ given the observations on $[0,t]$ is given by \eqref{eq:lr1} and \eqref{eq:lr2}.
\end{pro}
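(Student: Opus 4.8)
The plan is to combine the transfer principle of Theorem~\ref{thm:transfer-principle} with the Hitsuda representation theorem \cite{Hitsuda-1968}, the link between the two being that both the kernel $L$ and its resolvent $L^{-1}$ are of Volterra type.

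First I would observe that the transfer maps transport the natural filtrations. Since $L(t,s)=a\1_t(s)+bK_H(t,s)$ vanishes for $s>t$, the reconstruction $X_t=\int_0^t L(t,s)\,\d W_s$ is a measurable functional of $W|_{[0,t]}$, so $\F^{X}_t\subseteq\F^{W}_t$; conversely, by Lemma~\ref{lmm:inversion} and the Remark following it, the kernel $L^{-1}(t,s)$ of \eqref{eq:l-inv-kernel} also vanishes for $s>t$ and is regular enough that $W_t=\int_0^t L^{-1}(t,s)\,\d X_s$ is a measurable functional of $X|_{[0,t]}$, so $\F^{W}_t\subseteq\F^{X}_t$. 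Hence $\F^{X}_t=\F^{W}_t$ for every $t\in[0,T]$, and the same argument applied to \eqref{eq:tilde-tp1} gives $\F^{\tilde X}_t=\F^{\tilde W}_t$.

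Next I would transfer equivalence of laws. By the previous step the path map $\Phi\colon x\mapsto\big(t\mapsto\int_0^t L^{-1}(t,s)\,\d x_s\big)$ is a bimeasurable bijection on path space, with inverse $w\mapsto\big(t\mapsto\int_0^t L(t,s)\,\d w_s\big)$; by Theorem~\ref{thm:igvp} it sends the law of a ccmfBm to the Wiener measure and, by definition, the law of $\tilde X$ to the law of the process $\tilde W$ of \eqref{eq:tilde-tp1}. Since mutual absolute continuity of measures is preserved by bimeasurable bijections, $\tilde X$ is equivalent in law to a ccmfBm if and only if $\tilde W$ is equivalent in law to a Bm, which by the Hitsuda representation theorem happens exactly when $\tilde W$ has the form \eqref{eq:tildeW} for some $\ell\in L^2([0,T]^2)$ and $g\in L^2([0,T])$; in that case the Bm $W$ reconstructed from $\tilde W$ by \eqref{eq:W}, substituted into \eqref{eq:tilde-tp2}, is a ccmfBm by Theorem~\ref{thm:igvp}. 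For the likelihood ratio I would restrict $\Phi$ to $[0,t]$ (again a bimeasurable bijection, by the first step) to get
\[
\frac{\d\tilde\P}{\d\P}\Big|\F^{X}_t=\frac{\d\tilde\P}{\d\P}\Big|\F^{W}_t=\varphi_t,
\]
the Hitsuda ratio \eqref{eq:lr1}; rewriting $W$ in terms of $\tilde W$ (equivalently, $X$ in terms of $\tilde X$) then yields the equivalent expression \eqref{eq:lr2}.

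The main obstacle, and the only genuinely delicate point, is to show that the abstract Wiener integral $\tilde W_t=\int_0^t L^{-1}(t,s)\,\d\tilde X_s$ — a priori only an $L^2(\Omega)$-limit of integrals of step functions — agrees with an honest jointly measurable pathwise functional of $\tilde X$, so that $\Phi$ is genuinely a map between path spaces; one obtains this by integration by parts against $L^{-1}$, using the uniform convergence of \eqref{eq:l-inv-kernel} away from $s=0$ (the Remark after Lemma~\ref{lmm:inversion}) and the vanishing of the paths of $\tilde X$ at the origin to absorb the singularity there. Once this is settled, the filtration identity, the invariance of equivalence under the bimeasurable bijection $\Phi$, and the verbatim carry-over of the likelihood-ratio formulas \eqref{eq:lr1}--\eqref{eq:lr2} are all routine, exactly as in \cite{Sottinen-2004,Sottinen-Tudor-2006}.
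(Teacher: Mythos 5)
Your proposal is correct and follows essentially the route the paper itself takes: the paper derives Proposition~\ref{pro:cmgh} directly from the transfer principle (Theorems~\ref{thm:igvp} and~\ref{thm:transfer-principle}) combined with the Hitsuda representation theorem, exactly as in \cite{Sottinen-2004,Sottinen-Tudor-2006}, and your filtration and bijection arguments simply spell out the details the paper leaves implicit.
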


The Hitsuda representation can also be given without constructing the intermediate Bm's as follows:

\begin{pro}\label{pro:equivalence}
A Gaussian process $\tilde X$ is equivalent in law to the ccmfBm $X$ if and only if it admits the representation
$$
\tilde X_t = X_t - \int_0^t f(t,s) \, \d X_s
- \L g(t),
$$
where $g\in L^2([0,T])$ and the Voterra kernel $f$ is defined by the equation
\begin{equation}\label{eq:kumma}
\L^* f(t,\cdot)(s) = \L\ell(\cdot,s)(t)
\end{equation}
for some Volterra kernel $\ell\in L^2([0,T]^2)$.
\end{pro}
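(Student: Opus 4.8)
The plan is to transport the Hitsuda representation supplied by Proposition~\ref{pro:cmgh} across the transfer principle of Theorem~\ref{thm:transfer-principle}, so that the two auxiliary Brownian motions disappear and everything is expressed through $X$ and $\tilde X$ directly. The Volterra kernel equation \eqref{eq:kumma} will then appear precisely as the transfer-principle image of the kernel $\ell$ from the Hitsuda representation of the associated Bm, and the drift $-\L g$ as the image of the Hitsuda drift.

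Concretely: by Proposition~\ref{pro:cmgh}, $\tilde X$ is equivalent in law to a ccmfBm if and only if $\tilde W_t=\int_0^tL^{-1}(t,s)\,\d\tilde X_s$ is of the Hitsuda form \eqref{eq:tildeW} for some $\ell\in L^2([0,T]^2)$ and $g\in L^2([0,T])$; in that case $W_t=\int_0^tL^{-1}(t,s)\,\d X_s$ is a Bm, and since $L$ and $L^{-1}$ are mutually inverse kernels (Lemma~\ref{lmm:inversion}, Theorem~\ref{thm:igvp}) one has $X_t=\int_0^tL(t,s)\,\d W_s$ and likewise $\tilde X_t=\int_0^tL(t,s)\,\d\tilde W_s$. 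Substituting the differential form $\d\tilde W_s=\d W_s-\bigl(\int_0^s\ell(s,u)\,\d W_u+g(s)\bigr)\,\d s$ of \eqref{eq:tildeW} into the last identity decomposes $\tilde X_t$ into three pieces: $X_t$; the deterministic drift $\int_0^tL(t,s)g(s)\,\d s=\L g(t)$ (by the Volterra property of $L$); and, after a stochastic Fubini interchange, $\int_0^tp(t,u)\,\d W_u$ with $p(t,u)=\int_u^tL(t,s)\ell(s,u)\,\d s=\L\ell(\cdot,u)(t)$. Applying the transfer principle of Theorem~\ref{thm:transfer-principle} fibrewise in $t$ rewrites $\int_0^tp(t,u)\,\d W_u$ as $\int_0^tf(t,u)\,\d X_u$ with $f(t,\cdot)=(\L^*)^{-1}p(t,\cdot)$, i.e.\ $\L^*f(t,\cdot)(s)=p(t,s)=\L\ell(\cdot,s)(t)$, which is exactly \eqref{eq:kumma}; this yields the claimed representation. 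The converse is the same chain read in reverse: starting from the representation, Theorem~\ref{thm:transfer-principle} together with \eqref{eq:kumma} turns $\int_0^tf(t,s)\,\d X_s$ into $\int_0^tp(t,s)\,\d W_s$, the Fubini step is undone, and one reads off that $\tilde W_t=\int_0^tL^{-1}(t,s)\,\d\tilde X_s$ is of the form \eqref{eq:tildeW} with the prescribed $\ell$ and $g$, so Proposition~\ref{pro:cmgh} delivers the equivalence.

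What needs care --- and where the main obstacle lies --- is the bookkeeping. First, the two stochastic Fubini interchanges must be justified; this follows from $L\in L^2([0,T]^2)$, which holds since $\int_0^TL(t,s)^2\,\d s=a^2t+2ab\int_0^tK_H(t,s)\,\d s+b^2t^{2H}$ is integrable in $t$, together with $\ell\in L^2([0,T]^2)$, whence also $p\in L^2([0,T]^2)$ by Cauchy--Schwarz. Second, $f$ must be shown to be a genuine Volterra kernel in $L^2([0,T]^2)$: the Volterra property $f(t,s)=0$ for $s>t$ is inherited from the anti-Volterra structure of $\L^*$, hence of $(\L^*)^{-1}$, which maps functions supported in $[0,t]$ to functions supported in $[0,t]$, while $f\in L^2([0,T]^2)$ follows from $\|f(t,\cdot)\|_2\le C\|p(t,\cdot)\|_2$ using the $L^2$-boundedness of $\L$ and of $(\L^*)^{-1}$ recorded around Lemma~\ref{lmm:bounded} and in the proof of Theorem~\ref{thm:transfer-principle}. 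The genuine difficulty is reconciling the fibrewise and the joint pictures: Theorem~\ref{thm:transfer-principle} is stated for a single integrand, so one must check that applying it separately for each $t$ produces a jointly measurable $f\in L^2([0,T]^2)$ and that all the mixed Lebesgue--It\^o double integrals are simultaneously well defined; once the $L^2$-boundedness of $\L,\L^*,(\L^*)^{-1}$ and the identity $\Lambda=L^2$ are in hand this is routine, and a general form of exactly this manipulation is carried out in \cite{Sottinen-Viitasaari-2016b}.
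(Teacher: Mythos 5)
Your argument is correct and is exactly the route the paper intends: the paper states this proposition without a written proof, presenting it as the transfer-principle reformulation of Proposition~\ref{pro:cmgh} in the manner of \cite{Sottinen-2004,Sottinen-Tudor-2006}, and your substitution of the Hitsuda form of $\tilde W$ into $\tilde X_t=\int_0^t L(t,s)\,\d\tilde W_s$, followed by stochastic Fubini and the identification $\L^* f(t,\cdot)(s)=\L\ell(\cdot,s)(t)$, is precisely that argument. Your additional checks (square-integrability of $L$ and $p$, the Volterra property of $f$ via the anti-Volterra structure of $\L^*$, and boundedness of $(\L^*)^{-1}$ from the Liouville--Neumann series of Lemma~\ref{lmm:inversion}) supply details the paper leaves implicit and are accurate.
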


\begin{rem}
If $X$ is the Bm, then $\L=\I$ is simply the integral operator $\I f(t) = \int_0^t f(s)\d s$ and its adjoint $\L^*=\I^*$ is simply the identity operator.  Thus for the Bm the equation  \eqref{eq:kumma} takes the familiar form of the Hitsuda's representation theorem
$$
f(t,s) = \int_s^t \ell(u,s)\, \d u.
$$
for some $\ell\in L^2([0,T]^2)$.
\end{rem}

The Cameron--Martin--Girsanov likelihood $\varphi_t$ can also be given without constructing the intermediate Bm's by using Skorokhod integrals.  We omit this and instead consider a simple drift estimation case as an application of the Cameron--Martin--Girsanov theorem.

Suppose we want to test the following hypotheses given observations on the time interval $[0,t]$:
\begin{itemize}
\item[$H_0$] The observation comes from a centered ccmfBm $X_s$, $s\in[0,t]$, with known parameters $a,b,H$.
\item[$H_\theta$] The observation comes from a drifting ccmfBm $X_s+\theta s$, $s\in[0,t]$, with known parameters $a,b,H$ but unknown drift $\theta$.
\end{itemize}
Then the likelihood ratio between these hypothesis is
\begin{eqnarray*}
\varphi_t(\theta) &=& \frac{\d\P_\theta}{\d\P_0}\Big|\F_t \\
&=& 
\exp\left\{\theta w_t-\frac{\theta^2}{2} t \right\},
\end{eqnarray*}
where $w$ is constructed from the observations $x$ as 
$$
w_t = \int_0^t L^{-1}(t,s)\, \d x_s
$$
This leads to the maximum likelihood estimator of the drift $\theta$ given observations $x_s$, $s\in[0,t]$:
$$
\hat\theta_t
= \frac{1}{t}\int_0^{t} L^{-1}(t,s)\, \d x_s.
$$

\section{Prediction}\label{sect:prediction}

Let $\F_u^X = \sigma\{X_u; u\le t\}$ denote the $\sigma$-algebra of observing the ccmfBm over the interval $[0,t]$.  Naturally, we are interested in predicting the future, i.e., we are interested in the conditional future probability law of the process $X$ given the information $\F_u^X$.
The transfer principle of Theorem \ref{thm:transfer-principle} provides us these prediction formulas for the ccmfBm in the same way as in \cite{Sottinen-Viitasaari-2017a,Sottinen-Viitasaari-2020}:

\begin{thm}
The conditional process $t\mapsto X_t(u) = X_t|\F_u^X$, $t\ge u$,	is Gaussian with stochastic mean
\begin{eqnarray*}
\hat m_t(u) &=& \E\left[X_t\big| \F_u^X\right] \\
&=& X_u + \int_0^u \Psi(t,s|u)\, \d X_s,
\end{eqnarray*}
where
$$
\Psi(t,s|u) 
=
(\L^*)^{-1}\left[L(t,\cdot)-L(u,\cdot)\right](s);
$$
and with deterministic covariance
\begin{eqnarray*}
\hat R(t,s|u) &=& \mathbb{C}\mathrm{ov}\left[X_t,X_s\big| \F_u^X\right] \\
&=&
R(s,t) - \int_0^u L(t,v)L(s,v)\, \d v.
\end{eqnarray*}	
\end{thm}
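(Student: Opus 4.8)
The plan is to deduce everything from the direct and inverse transfer principles of Theorems~\ref{thm:igvp} and~\ref{thm:transfer-principle}, reducing the statement to elementary Gaussian conditioning with respect to the driving Bm. Fix the Bm $W$ from Theorem~\ref{thm:igvp}, so that simultaneously $W_t=\int_0^t L^{-1}(t,s)\,\d X_s$ and $X_t=\int_0^t L(t,s)\,\d W_s$. Since $L(t,\cdot)$ and $L^{-1}(t,\cdot)$ are Volterra kernels, the abstract Wiener integral of any $L^2$-function supported on $[0,u]$ lies in the closed linear span of $\{X_s:s\le u\}$ (respectively of $\{W_s:s\le u\}$), hence is $\F^X_u$-measurable (respectively $\F^W_u$-measurable); applying this to $L^{-1}(t,\cdot)$ and to $L(t,\cdot)$ with $t\le u$ yields both $\F^W_u\subseteq\F^X_u$ and $\F^X_u\subseteq\F^W_u$, so that $\F^X_u=\F^W_u$.

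For the conditional mean, fix $t\ge u$ and split
$$
X_t=\int_0^u L(t,s)\,\d W_s+\int_u^t L(t,s)\,\d W_s .
$$
The first term is $\F^W_u$-measurable, while the second is a centered Gaussian variable depending only on the increments of $W$ on $(u,t]$, hence independent of $\F^W_u=\F^X_u$. Therefore $\hat m_t(u)=\int_0^u L(t,s)\,\d W_s$, and subtracting $X_u=\int_0^u L(u,s)\,\d W_s$ gives $\hat m_t(u)=X_u+\int_0^u[L(t,s)-L(u,s)]\,\d W_s$. It remains to rewrite this $\d W$-integral with respect to $X$: for $s\le u$ one has $L(t,s)-L(u,s)=b[K_H(t,s)-K_H(u,s)]\in L^2([0,u])$, so I would apply the inverse transfer principle of Theorem~\ref{thm:transfer-principle} on the subinterval $[0,u]$ — legitimate because $X$ restricted to $[0,u]$ is again a ccmfBm driven by $W$ restricted to $[0,u]$ — to get $\int_0^u[L(t,s)-L(u,s)]\,\d W_s=\int_0^u(\L^*)^{-1}[L(t,\cdot)-L(u,\cdot)](s)\,\d X_s=\int_0^u\Psi(t,s|u)\,\d X_s$. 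Equivalently one may apply Theorem~\ref{thm:transfer-principle} on $[0,T]$ to $[L(t,\cdot)-L(u,\cdot)]\1_{[0,u]}$, noting that the anti-Volterra operator $(\L^*)^{-1}$ maps functions supported on $[0,u]$ to functions supported on $[0,u]$.

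For the covariance and Gaussianity, write $X_t=\hat m_t(u)+N_t$ with $N_t=\int_u^t L(t,s)\,\d W_s$ for $t\ge u$. The process $(N_t)_{t\ge u}$ is centered Gaussian and independent of $\F^X_u$, so conditionally on $\F^X_u$ the family $(X_t)_{t\ge u}$ is Gaussian with mean $\hat m_t(u)$ and, for $s,t\ge u$, covariance $\E[N_t N_s]=\int_u^{t\wedge s}L(t,v)L(s,v)\,\d v$ by the It\^o isometry. Expanding $\int_0^{t\wedge s}L(t,v)L(s,v)\,\d v$ reproduces exactly the three terms of \eqref{eq:cov}, i.e.\ $R(t,s)=\int_0^{t\wedge s}L(t,v)L(s,v)\,\d v$; hence the conditional covariance equals $R(t,s)-\int_0^u L(t,v)L(s,v)\,\d v$, which is the asserted $\hat R(t,s|u)$.

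The two Gaussian-conditioning arguments and the covariance expansion are routine. The point requiring genuine care — and the one I expect to be the main obstacle — is the localization in the mean computation: Theorem~\ref{thm:transfer-principle} is stated on $[0,T]$, whereas $\Psi(t,\cdot|u)$ must be obtained by transferring on $[0,u]$ (equivalently, by restricting $L(t,\cdot)-L(u,\cdot)$ to $[0,u]$ before inverting $\L^*$, which is not the same as inverting first and then restricting); relatedly, one must check that the abstract Wiener integral of a function supported on $[0,u]$ genuinely lies in the $\F_u$-measurable first chaos, which is precisely what legitimizes both the filtration identification and the splitting of $X_t$.
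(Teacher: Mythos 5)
Your argument is correct and is essentially the route the paper itself intends: the paper gives no written proof beyond invoking the transfer principle ``in the same way as'' Sottinen--Viitasaari, and your derivation --- $\F^X_u=\F^W_u$ via the invertible Volterra representation, splitting $X_t=\int_0^u L(t,s)\,\d W_s+\int_u^t L(t,s)\,\d W_s$, the It\^o isometry identity $R(t,s)=\int_0^{t\wedge s}L(t,v)L(s,v)\,\d v$, and transferring the mean back to an integral against $X$ --- is exactly that standard argument. Your reading of $\Psi(t,\cdot|u)$, namely applying $(\L^*)^{-1}$ on the interval $[0,u]$, i.e.\ to the restriction $[L(t,\cdot)-L(u,\cdot)]\1_{[0,u]}$ (legitimate because functions supported on $[0,u]$ form an invariant subspace of the anti-Volterra operators $\L^*$ and $(\L^*)^{-1}$, on which they coincide with their $[0,u]$-interval versions), is the correct interpretation of the paper's loose notation, since inverting first on $[0,T]$ and then restricting would instead reproduce the trivial identity for $X_t-X_u$ rather than the predictor.
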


\section{Simulation}\label{sect:simulation}

We illustrate the long-range dependent ccmfBm by simulating its paths.  The paths are simulated on $N=500$ equidistant time points $t_k=k/N$ on the interval $[0,1]$ by using the Cholesky decomposition $\mathbf{R}_H = \mathbf{L}_H\mathbf{L}_H^\top$ of the covariance matrix $\mathbf{R}_H(k,j) = R_H(t_k,t_j)$ of the fBm:
\begin{equation}\label{eq:chol_simu}
X_{t_k} = \frac{a}{\sqrt{N}}\sum_{j=1}^{k} \xi_j + b\sum_{j=1}^k \mathbf{L}_H(k,j)\xi_j,
\end{equation}
where $\xi_j$'s are i.i.d. standard random variables.

\begin{rem}
Equation \eqref{eq:chol_simu} provides exact simulation.  The cost of having exact simulation is of course in calculating the Cholesky decomposition.  If one is happy to make some error in the simulation, one can use the Molchan--Golosov representation of the fBm for simulation. One way of doing this is to use the approximation
\begin{equation}\label{eq:simu_approx}
X_{t} \approx
\sum_{j=1}^{[Nt]} \left[a + bN\int_{\frac{j-1}{N}}^{\frac{j}{N}}K_H\left(\frac{[Nt]}{N}, s\right)\, \d s\, \right]\frac{1}{\sqrt{N}}\xi_j
\end{equation}
and approximate the integral above in some way.  If the integral is approximated in an efficient way, then this approach can be fast as it avoids calculating the Cholesky decomposition and instead uses the Molchan--Golosov kernel $K_H$ as a proxy for the Cholesky square root.
See \cite{Sottinen-2001} for more information on the convergence of the approximation \eqref{eq:simu_approx}.
\end{rem}

\begin{rem}
The integral representation
$$
X_t = \int_0^t L(t,s)\, \d W_s
$$
provides us with a series expansion.  Indeed, let $(\tilde e_k)_{k=1}^\infty$ be your favorite orthonormal basis on $L^2([0,T])$ (note that the basis functions $\tilde e_k$ depend on $T$) and let $(\xi_k)_{k=1}^\infty$ be a sequence of i.i.d. standard normal random variables.  Let
\begin{equation}\label{eq:series-integral}
e_k(t) = \int_0^t L(t,s)\tilde e_k(s)\, \d s.
\end{equation}
Then it follows that
\begin{equation}\label{eq:series}
X_t = \sum_{k=1}^\infty e_k(t)\xi_k,
\end{equation}
where the series \eqref{eq:series-integral} converges both in $L^2$ and pointwise.
See \cite{Gilsing-Sottinen-2003} for details and for an explicit series expansion for the fBm that be extended to the ccmfBm in a straightforward manner.
Cutting the series \eqref{eq:series} and approximating the integral \eqref{eq:series-integral} provides us yet another way to approximately simulate the ccmfBm process.
\end{rem}

In figures 4--6 we have plotted simulated paths of the completely correlated mixed fractional Brownian motion (ccmfBm) together with its components, the Brownian motion (Bm) and the completely correlated fractional Brownian motion (ccfBm) that is constructed from the Bm.  The plots were created by using the exact simulation formula \eqref{eq:chol_simu}.

\begin{figure}[H]\label{fig:simu1}
	\includegraphics[width=\textwidth]{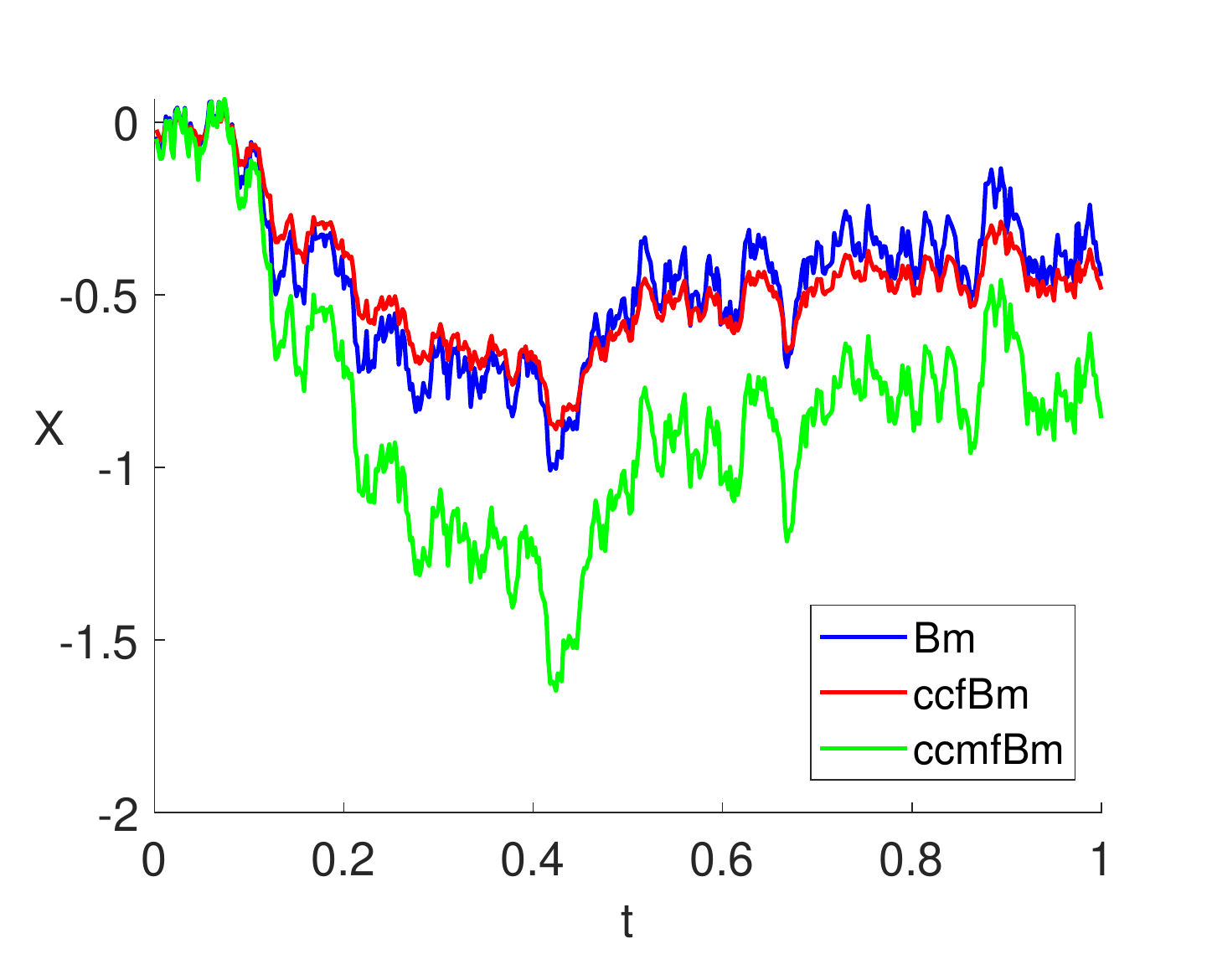}
	\caption{ccmfBm with $a=0.4$, $b=1.4$, $H=0.6$.}
\end{figure}

\begin{figure}[H]\label{fig:simu2}
	\includegraphics[width=\textwidth]{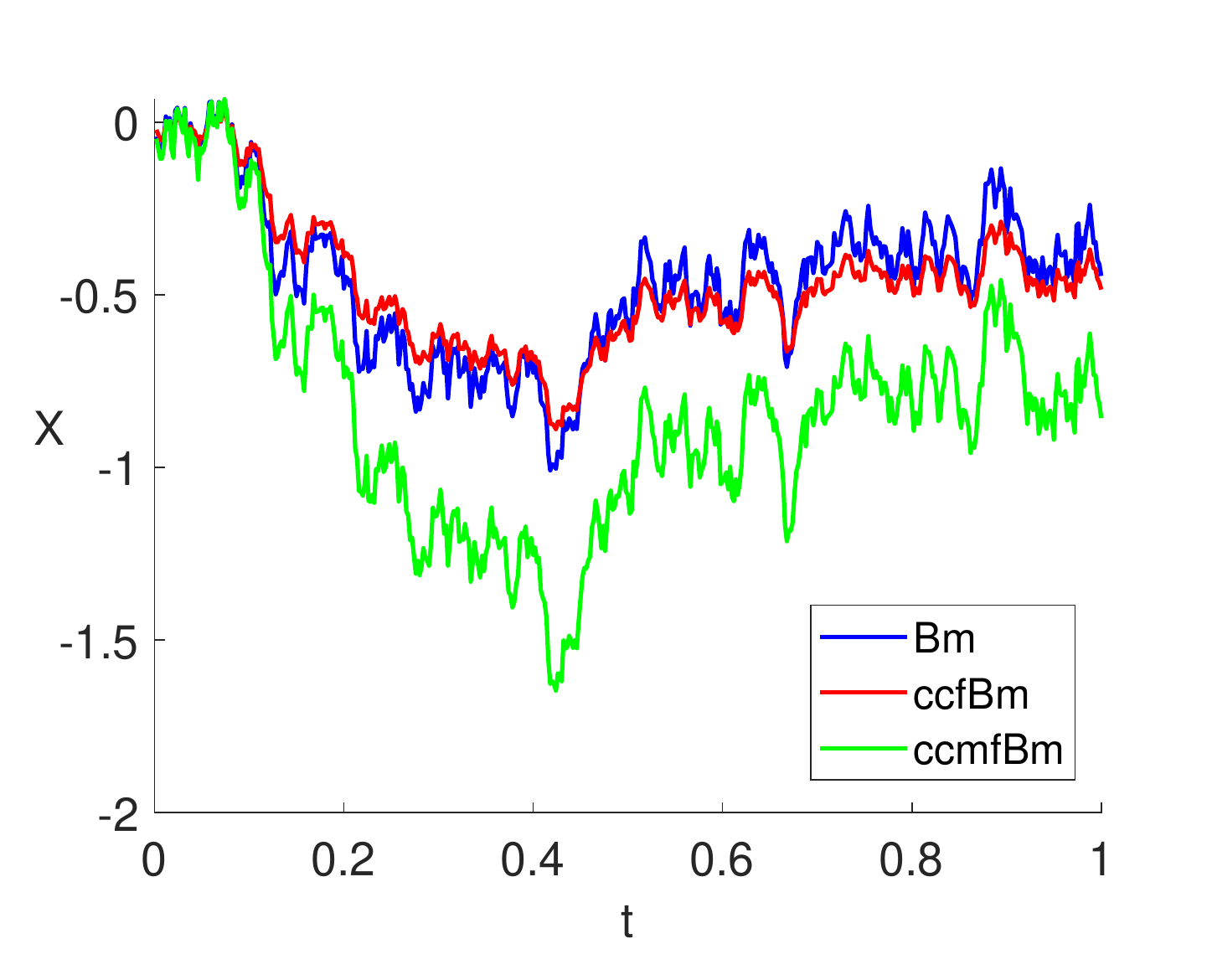}
	\caption{ccmfBm with $a=1$, $b=3$, $H=0.75$.}
\end{figure}

\begin{figure}[H]\label{fig:simu3}
	\includegraphics[width=\textwidth]{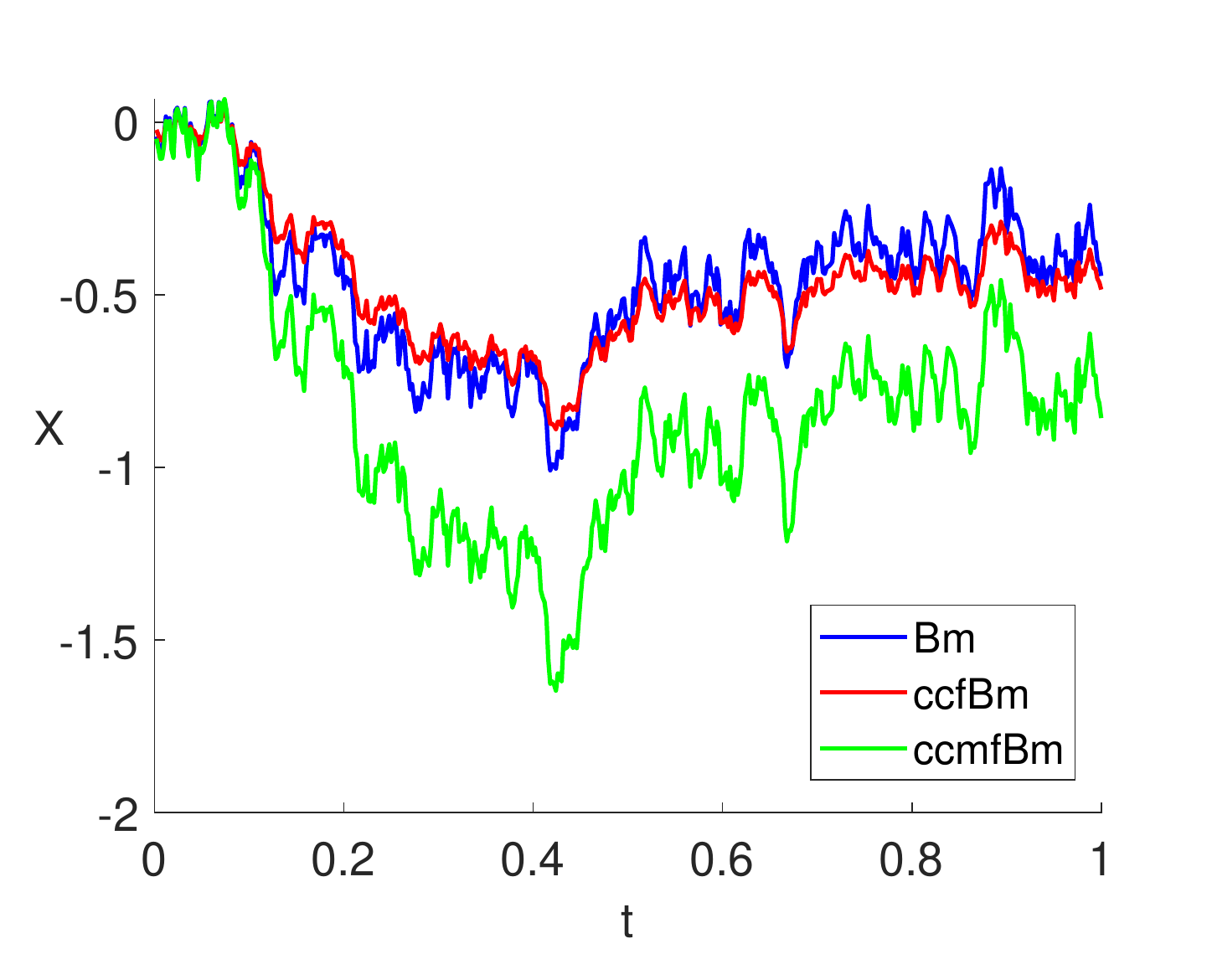}
	\caption{ccmfBm with $a=4$, $b=9$, $H=0.9$.}
\end{figure}

Figures 4--6 illustrate that the H\"older continuity of the ccmfBm is the same as the H\"older continuity of the driving Bm.

\section{Conclusions and Discussion}\label{sect:conclusions}

We have considered the long-range dependent completely correlated mixed fractional Brownian motion (ccmfBm) $X$ that is constructed by using a single Brownian motion (Bm) $W$ and a complete correlated fractional Brownian motion (ccfBm) $B^H$ as
\begin{eqnarray}
X_t &=& a W_t + b B^H_t \nonumber \\
&=&
aW_t + b\int_0^t K_H(t,s)\, \d W_s, \label{eq:tp}
\end{eqnarray}
where $K_H(t,s)$ is the so-called Molchan--Golosov kernel.

We have shown that the short-time path behavior (H\"older continuity and quadratic variation) of the ccmfBm are the same as those of the Bm, but unlike the Bm, the ccmfBm has long-range dependence that is characterized by the fBm part.  We have constructed explicitly the transfer principle for the ccmfBm, i.e., we have constructed explicitly a kernel $L^{-1}$ such that given the ccmfBm $X$ the driving Bm $W$ in \eqref{eq:tp} can be recovered from the equation
$$
W_t = \int_0^t L^{-1}(t,s)\, \d X_s.
$$
We also noted that the transfer principle for ccmfBm has a more convenient form than the corresponding principle of independent mixture mfBm. Indeed, our inverse kernel $L^{-1}$ have a series expansion that converges fast, while for the independent mfBm it is only known that such an inverse kernel exists and it is a solution of a certain integral equation, see \cite{Cai-Chigansky-Kleptsyna-2016}. 
We have considered some applications of the transfer principle like parameter estimation and prediction.  We have illustrated the ccmfBm by simulations.

Finally, let us discuss shortly the short-range dependent ccmfBm.  In the short-range dependent case when $H\in(0,1/2)$ the Molchan--Golosov kernel $K_H$ takes a different form and the adjoint associated operator $\L^*$ of the kernel $L=a\1+bK_H$ is no longer a bounded operator on $L^2$.  Consequently, Lemma \ref{lmm:bounded} is no longer true.  In particular, the integrand space $\Lambda$ will be a strict subset of $L^2$ and the transfer principle of Theorem \ref{thm:transfer-principle} is no longer true without radical modifications. Also, the short time-scale behavior of the short-range dependent ccmfBm will be governed by the fBm part, not the Brownian part:  $H$ will be the H\"older index of the ccmfBm and the quadratic variation will be infinite.

\bibliographystyle{plain}
\bibliography{../../pipliateekki}

\begin{thebibliography}{10}

\bibitem{Ayache-Levy-Vehel-1999}
Antoine Ayache and Jacques L\'{e}vy~V\'{e}hel.
\newblock Generalized multifractional {B}rownian motion: definition and
  preliminary results.
\newblock In {\em Fractals: theory and applications in engineering}, pages
  17--32. Springer, London, 1999.

\bibitem{Azmoodeh-Sottinen-Viitasaari-Yazigi-2014}
Ehsan Azmoodeh, Tommi Sottinen, Lauri Viitasaari, and Adil Yazigi.
\newblock Necessary and sufficient conditions for {H}\"older continuity of
  {G}aussian processes.
\newblock {\em Statist. Probab. Lett.}, 94:230--235, 2014.

\bibitem{Bender-Sottinen-Valkeila-2008}
Christian Bender, Tommi Sottinen, and Esko Valkeila.
\newblock Pricing by hedging and no-arbitrage beyond semimartingales.
\newblock {\em Finance Stoch.}, 12(4):441--468, 2008.

\bibitem{Biagini-Hu-Oksendal-Zhang-2008}
Francesca Biagini, Yaozhong Hu, Bernt {\O}ksendal, and Tusheng Zhang.
\newblock {\em Stochastic calculus for fractional {B}rownian motion and
  applications}.
\newblock Probability and its Applications (New York). Springer-Verlag London,
  Ltd., London, 2008.

\bibitem{Cai-Chigansky-Kleptsyna-2016}
Chunhao Cai, Pavel Chigansky, and Marina Kleptsyna.
\newblock Mixed {G}aussian processes: {A} filtering approach.
\newblock {\em Ann. Probab.}, 44(4):3032--3075, 2016.

\bibitem{Cheridito-2001}
Patrick Cheridito.
\newblock Mixed fractional {B}rownian motion.
\newblock {\em Bernoulli}, 7(6):913--934, 2001.

\bibitem{Follmer-1981}
H.~F{\"o}llmer.
\newblock Calcul d'{I}t\^o sans probabilit\'es.
\newblock In {\em Seminar on {P}robability, {XV} ({U}niv. {S}trasbourg,
  {S}trasbourg, 1979/1980) ({F}rench)}, volume 850 of {\em Lecture Notes in
  Math.}, pages 143--150. Springer, Berlin, 1981.

\bibitem{Gilsing-Sottinen-2003}
Hagen Gilsing and Tommi Sottinen.
\newblock Power series expansions for fractional {B}rownian motions.
\newblock {\em Theory Stoch. Process.}, 9(3-4):38--49, 2003.

\bibitem{Hitsuda-1968}
Masuyuki Hitsuda.
\newblock Representation of {G}aussian processes equivalent to {W}iener
  process.
\newblock {\em Osaka J. Math.}, 5:299--312, 1968.

\bibitem{Kolmogorov-1940}
A.~N. Kolmogoroff.
\newblock Wienersche {S}piralen und einige andere interessante {K}urven im
  {H}ilbertschen {R}aum.
\newblock {\em C. R. (Doklady) Acad. Sci. URSS (N.S.)}, 26:115--118, 1940.

\bibitem{Mandelbrot-Van-Ness-1968}
Benoit~B. Mandelbrot and John~W. Van~Ness.
\newblock Fractional {B}rownian motions, fractional noises and applications.
\newblock {\em SIAM Rev.}, 10:422--437, 1968.

\bibitem{Mishura-2008}
Yuliya~S. Mishura.
\newblock {\em Stochastic calculus for fractional {B}rownian motion and related
  processes}, volume 1929 of {\em Lecture Notes in Mathematics}.
\newblock Springer-Verlag, Berlin, 2008.

\bibitem{Molchan-Golosov-1969}
G.~M. Mol\v{c}an and Ju.~I. Golosov.
\newblock Gaussian stationary processes with asymptotically a power spectrum.
\newblock {\em Dokl. Akad. Nauk SSSR}, 184:546--549, 1969.

\bibitem{Perrin-Harba-Berzin-Joseph-Iribarren-Bonami-2001}
E.~Perrin, R.~Harba, C.~Berzin-Joseph, I.~Iribarren, and A.~Bonami.
\newblock nth-order fractional brownian motion and fractional gaussian noises.
\newblock {\em IEEE Transactions on Signal Processing}, 49(5):1049--1059, May
  2001.

\bibitem{Perrin-Harba-Iribarren-Jennane-2005}
Emmanuel Perrin, Rachid Harba, Ileana Iribarren, and Rachid Jennane.
\newblock Piecewise fractional {B}rownian motion.
\newblock {\em IEEE Trans. Signal Process.}, 53(3):1211--1215, 2005.

\bibitem{Sottinen-2004}
T.~Sottinen.
\newblock On {G}aussian processes equivalent in law to fractional {B}rownian
  motion.
\newblock {\em J. Theoret. Probab.}, 17(2):309--325, 2004.

\bibitem{Sottinen-2001}
Tommi Sottinen.
\newblock Fractional {B}rownian motion, random walks and binary market models.
\newblock {\em Finance Stoch.}, 5(3):343--355, 2001.

\bibitem{Sottinen-Tudor-2006}
Tommi Sottinen and Ciprian~A. Tudor.
\newblock On the equivalence of multiparameter {G}aussian processes.
\newblock {\em J. Theoret. Probab.}, 19(2):461--485, 2006.

\bibitem{Sottinen-Viitasaari-2015}
Tommi Sottinen and Lauri Viitasaari.
\newblock {F}redholm representation of multiparameter {G}aussian processes with
  applications to equivalence in law and series expansions.
\newblock {\em Mod. Stoch. Theory Appl.}, 2(3):287--295, 2015.

\bibitem{Sottinen-Viitasaari-2016b}
Tommi Sottinen and Lauri Viitasaari.
\newblock Stochastic {A}nalysis of {G}aussian {P}rocesses via {F}redholm
  {R}epresentation.
\newblock {\em Int. J. Stoch. Anal.}, pages Art. ID 8694365, 15, 2016.

\bibitem{Sottinen-Viitasaari-2017a}
Tommi Sottinen and Lauri Viitasaari.
\newblock Parameter estimation for the {L}angevin equation with
  stationary-increment {G}aussian noise.
\newblock {\em Statistical Inference for Stochastic Processes}, Jan 2017.

\bibitem{Sottinen-Viitasaari-2020}
Tommi Sottinen and Lauri Viitasaari.
\newblock Prediction law of mixed {G}aussian {V}olterra processes.
\newblock {\em Statist. Probab. Lett.}, 156:108594, 6, 2020.

\end{thebibliography}
\end{document}